\newtheorem{theorem}{Theorem}[section]
\newtheorem{proposition}[theorem]{Proposition}
\newtheorem{lemma}[theorem]{Lemma}
\theoremstyle{definition}
\newtheorem{definition}[theorem]{Definition}
\newtheorem{remark}[theorem]{Remark}
\newtheorem{example}[theorem]{Example}
\begin{document}
\title[Nonuniform exponential stability]{Admissible Banach function spaces for linear dynamics with nonuniform behavior
on the half-line}
\author[N. Lupa]{Nicolae Lupa}
\address{N. Lupa, Department of Mathematics, Politehnica University of Timi\c soara,
Victoriei Square 2, 300006 Timi\c soara, Romania}
\email{nicolae.lupa@upt.ro}
\author[L.H. Popescu]{Liviu Horia Popescu}
\address{L. H. Popescu, Department of Mathematics and Informatics, Faculty of Sciences,
University of Oradea, Universit\u a\c tii St. 1, 410087 Oradea, Romania}
\email{lpopescu2002@yahoo.com}

\begin{abstract}
For nonuniform exponentially bounded evolution families on the half-line, we
introduce a special class of Banach function spaces, on which we define
certain $C_{0}$-semigroups. We characterize the existence of nonuniform
exponential stability in terms of invertibility of the corresponding infinitesimal
generators. The invertibility of  these generators is connected to a particular type
of admissible exponents that are specific to nonuniform behavior. For the
bounded orbits, nonuniform exponential stability results from a spectral
property of generators. The $C_{0}$-semigroups we deal with verify the
spectral mapping theorem, as well as the evolution semigroups, in the uniform case. In
particular, our results directly apply to all linear differential equations with
finite Lyapunov exponent.
\end{abstract}
\subjclass[2010]{34D20,47D06}
\keywords{Evolution families, nonuniform exponential stability, admissible Banach function spaces, $C_{0}$-semigroups}
\maketitle

\section{Introduction}

\label{sec.intr}

A linear dynamics is called \emph{well-posed} if we assume the existence,
uniqueness and continuous dependence of solutions on initial data. For a  nonautonomous  linear differential equation on the
half-line $dx/dt=A(t)x$, well-posedness
is equivalent to the existence of an \emph{evolution family} solving the
equation (Proposition 9.3 in \cite{Eng}, p. 478).
In particular, if the linear operators $A\left(  t\right)  $ are bounded, then well-posedness
is guaranteed \cite[Chapter 3]{Dal}.

In the stability theory of linear dynamics, a central problem is to find
conditions for the existence of exponential stability,  dichotomy or trichotomy. A significant method is represented by the input-output techniques,
often called \emph{admissibility methods}. More exactly, the study of
asymptotic behavior of an evolution family $\mathcal{U}=\left\{  U(t,s)\right\}_{t\geq s\geq0}$ reduces to the analysis of the
solvability of the integral equation
\[
u(t)=U(t,s)u(s)+\int_{s}^{t}U(t,\tau)f(\tau)d\tau,\;t\geq s\geq0,
\]
in a wide class of pairs of certain function spaces (ex. $L^{p}(\mathbb{R}%
_{+},X)$, Sch\"{a}ffer spaces, Lorentz spaces or some function spaces occurring in the interpolation
theory \cite{Huy,Pr.Po.Pr.2005}).

Another approach uses the so-called \emph{evolution semigroup} $\mathcal{T}%
=\{T(t)\}_{t\geq0}$ on some appropriate Banach function spaces, basically
defined as
\begin{equation}
(T(t)u)(s)=%
\begin{cases}
U(s,s-t)u(s-t), & \text{ if }s>t,\\
U(s,0)u(0), & \text{ if }0\leq s\leq t.
\end{cases}
\label{eq.semigroup}%
\end{equation}
If the evolution family $\mathcal{U}=\left\{  U(t,s)\right\}_{t\geq s\geq0}$
is \emph{uniform exponentially bounded}, i.e. $\sup\limits_{t\geq s\geq
0}e^{-\alpha(t-s)}\left\Vert U(t,s)\right\Vert <\infty$ for some $\alpha
\in\mathbb{R}$, then $\mathcal{T}$ becomes a $C_{0}$-semigroup on certain
function spaces, as for example $C_{00}(\mathbb{R}_{+},X)$ or $L^{p}%
(\mathbb{R}_{+},X)$, $1\leq p<\infty$. In this case the study of a
nonautonomous equation reduces to the analysis of an autonomous one, precisely
the asymptotic behavior of $\mathcal{U}$ can be characterized in terms of
spectral properties of the generator of $\mathcal{T}$. For more details on
this issue, we refer the reader to monograph \cite{Ch.La.1999} or to papers
\cite{Bu,Mi.Ra.Sc.1998}.

The classical theory of uniform behavior is unfortunately too restrictive, as
linear dynamics mostly fail to exhibit this type of behavior. In the last
decades a more general view emerged: \emph{nonuniform behavior}. A serious
motivation for introducing this concept, not to mention its evident
generality, lies in the ergodic theory (see  for instance \cite{Ba.Pe.2002,Ba.Va.2008-1} and
the references therein). Roughly speaking, while uniformity relates to the
finiteness of the \emph{Bohl exponent} \cite[Chapter 3]{Dal}, nonuniformity
analyses the more general situations when the \emph{Lyapunov exponent} is
finite \cite{Ba.Pe.2002}. Pretty recent works, as for instance
\cite{Ba.Dr.Va,Ba.Va.2010,Me.Sa.Sa.2002-1,Sa.Ba.Sa.2013,chin}, expose significant
admissibility-type results for nonuniform behavior. Let us also mention
another interesting generalization of the classical theory of stability:
asymptotic behavior which is both nonuniform and not necessarily exponential.
For example, paper \cite{Be.Si.2012} studies the nonuniform polynomial behavior.

The main goal of this paper is \emph{to study nonuniform exponential stability
of an evolution family on the half-line using the $C_{0}$-semigroups theory}.
Such endeavor is not at all of a formal type, as illustrated in our examples.
For instance, Example \ref{ex2} presents a uniform exponentially bounded
evolution family, which is not uniform, but nonuniform exponentially stable.
In this case the evolution semigroup exists, but it does not furnish any kind
of information about the asymptotic behavior of the orbits. From another hand,
the evolution family in our Example \ref{ex1} is at the same time nonuniform
exponentially stable, and not uniform exponentially bounded. In this case it
is impossible to construct the evolution semigroup. To sum up, in many
situation the classical tool either does not exist, or it is completely
useless.
Let us briefly present the main ideas of our work.

For a nonuniform exponentially bounded evolution family $\mathcal{U}$ and for
a fixed admissible exponent $\alpha\in\mathbb{R}$, we introduce the
corresponding \emph{admissible Banach function space}, precisely looking for
those functions $u\in C_{00}(\mathbb{R}_{+},X)$, for which the map
$s\mapsto\sup\limits_{t\geq s}e^{-\alpha(t-s)}\left\Vert U(t,s)u\left(
s\right)  \right\Vert $ vanishes at infinity. On each admissible Banach
function space, evidently depending on $\mathcal{U}$ and $\alpha$, we define a
$C_{0}$-semigroup, formally using formula (\ref{eq.semigroup}).  We
completely characterize the existence of nonuniform exponential stability of
$\mathcal{U}$ in terms of invertibility of infinitesimal generators. We give
a necessary and sufficient condition for a fixed generator to be invertible,
introducing a particular type of admissible exponent that we call
\emph{quasi-negative}, specific to nonuniform behavior. Let us emphasize that
the $C_{0}$-semigroups we introduce verify \emph{the spectral mapping theorem}
(as well as the evolution semigroups, in the uniform setting). In the last
section we prove a sufficient condition for the existence of nonuniform
exponential stability of all bounded orbits of a linear dynamics.

\section{Admissible Banach function spaces and $C_{0}$-semigroups}

\label{sec.admissible}

Throughout our paper $X$ is a Banach space, $C(\mathbb{R}_{+},X)$ denotes the
space of all continuous $X$-valued functions defined on the half-line, and
$C_{c,0}(\mathbb{R}_{+},X)$ is the space of all functions in $C(\mathbb{R}%
_{+},X)$ with compact support vanishing at $0$. We also make use of the
following notation:
\[
C_{00}(\mathbb{R}_{+},X)=\left\{  u\in C(\mathbb{R}_{+},X):\,\lim
\limits_{t\rightarrow\infty}u(t)=u(0)=0\right\}  \text{.}%
\]

Let us first recall the notion of evolution family:
\begin{definition}
\label{d.ev} A family of  bounded
linear operators $\mathcal{U}=\left\{  U(t,s)\right\}
_{t\geq s\geq0}$  is called an \emph{evolution family} (on the half-line)  if
\begin{enumerate}
\item[($e_1$)] $U(t,t)=\mathrm{Id}$, $t\geq0$;

\item[($e_2$)] $U(t,\tau)U(\tau,s)=U(t,s)$, $t\geq\tau\geq s\geq0$;

\item[($e_3$)] the map $(t,s)\mapsto U(t,s)x$ is continuous for every $x\in X$.
\end{enumerate}
\end{definition}

\begin{definition}
\label{d.exp.bound} For any fixed $\alpha\in\mathbb{R}$, an evolution family
$\mathcal{U}$ is called \emph{$\alpha$-nonuniform exponentially bounded}, if
there exists a continuous map $M_{\alpha}:\mathbb{R}_{+}\rightarrow(0,\infty)$
such that
\begin{equation}
\parallel U(t,s)\parallel\leq M_{\alpha}(s)e^{\alpha(t-s)}\text{, }t\geq
s\geq0\text{.} \label{eq.exp.bound}%
\end{equation}

If the above estimation holds for some $\alpha<0$, then $\mathcal{U}$ is
called $\alpha$-\emph{nonuniform exponentially stable}. Each $\alpha$
satisfying (\ref{eq.exp.bound}) is called an \emph{admissible exponent}, and
we denote $\mathcal{A}\left(  \mathcal{U}\right)  $ the set of all admissible
exponents. Evidently, for each evolution family $\mathcal{U}$, the set
$\mathcal{A}\left(  \mathcal{U}\right)  $ is either a (semi) infinite
interval, or empty. If $\mathcal{A}\left(  \mathcal{U}\right)  \neq\emptyset$,
then the evolution family $\mathcal{U}$ is called \emph{nonuniform
exponentially bounded}, and if $\mathcal{A}\left(  \mathcal{U}\right)  $
contains negative admissible exponents, we say that $\mathcal{U}$ is
\emph{nonuniform exponentially stable}.

In the above terminology, whenever there exists a \emph{bounded map}
$M_{\alpha}(s)$ satisfying (\ref{eq.exp.bound}) (which is equivalent to the
existence of a constant one), we just replace the term \textquotedblleft
nonuniform\textquotedblright\ with \textquotedblleft uniform\textquotedblright%
. Also, in such cases we call $\alpha$ a \emph{strict} (\emph{admissible})
\emph{exponent}, and we denote $\mathcal{A}_{s}\left(  \mathcal{U}\right)  $
the set of all strict exponents.
\end{definition}

Throughout this work, if not specified, we always assume that $\mathcal{U}%
=\left\{  U(t,s)\right\}  _{t\geq s\geq0}$ is a nonuniform exponentially
bounded evolution family (i.e. $\mathcal{A}(\mathcal{U})$ is nonempty).

\begin{remark}
\label{rem1} Assume that the evolution family $\mathcal{U}$ is reversible
(i.e. $U(t,s)$ is invertible for all $t\geq s\geq0$, and $U(s,t)$%
=$[U(t,s)]^{-1}$). If the Lyapunov exponent $K_{L}$ is finite and not
attained, then $\mathcal{A}\left(  \mathcal{U}\right)  =\left(  K_{L}%
,\infty\right)  $ (in particular $K_{L}=-\infty$ whenever $\mathcal{A}\left(
\mathcal{U}\right)  =\mathbb{R}$). Also if the Bohl exponent $K_{B}$ is finite
and not attained, then $\mathcal{A}_{s}\left(  \mathcal{U}\right)  =\left(
K_{B},\infty\right)  $. The intervals of admissibility are closed at their
left endpoints whenever the Lyapunov or the Bohl exponents are attained.
\end{remark}

Indeed, as
\[
K_{L}=\inf\left\{  \alpha\in\mathbb{R}:\,\text{there exists }M_{\alpha
}>0\text{ with }\left\Vert U\left(  t,0\right)  \right\Vert \leq M_{\alpha
}e^{\alpha t}\text{, }t\geq0\right\}  ,
\]
if $\alpha\in\mathcal{A}\left(  \mathcal{U}\right)  $, replacing $s=0$ in
\eqref{eq.exp.bound}, one has $\parallel U(t,0)\parallel\leq M_{\alpha
}(0)e^{\alpha t},$ and since $K_{L}$ is not attained, we have $\alpha\in
(K_{L},\infty)$. For $\alpha\in(K_{L},\infty)$, assuming that $\mathcal{U}$ is
reversible, we get
\[
\left\Vert U(t,s)\right\Vert =\left\Vert U(t,0)U(0,s)\right\Vert \leq
M_{\alpha}e^{\alpha t}\left\Vert U(0,s)\right\Vert =M_{\alpha}e^{\alpha
s}\left\Vert U(0,s)\right\Vert e^{\alpha(t-s)},
\]
that implies $\alpha\in\mathcal{A}\left(  \mathcal{U}\right)  $. The second
statement can be proved similarly if we notice that
\[
K_{B}=\inf\left\{  \alpha\in\mathbb{R}:\,\text{there exists }M_{\alpha
}>0\text{ with }\left\Vert U\left(  t,s\right)  \right\Vert \leq M_{\alpha
}e^{\alpha(t-s)},\;t\geq s\geq0\right\}  .
\]

\bigskip

Let $\alpha\in\mathcal{A}\left(  \mathcal{U}\right)  $. For $t\geq0$ and
$u\in{C}(\mathbb{R}_{+},X)$ we set%
\begin{equation}
\varphi_{\mathcal{U},\alpha}(t,u)=\underset{\tau\geq t}{\sup}\text{
}e^{-\alpha(\tau-t)}\parallel U(\tau,t)u(t)\parallel. \label{eq6}%
\end{equation}

If in particular $u(t)\equiv x$ for some $x\in X$, we step over the norm on
$X$ defined in \cite[Eq. (5)]{Ba.Va.2010}, precisely $\left\Vert x\right\Vert
_{t}=\underset{\tau\geq t}{\sup}$ $e^{-\alpha(\tau-t)}\parallel U(\tau
,t)x\parallel$. In this regard, we notice that the map $\varphi_{\mathcal{U}%
,\alpha}$ in (\ref{eq6}) can be (indirectly) defined as $\varphi
_{\mathcal{U},\alpha}(t,u)=\left\Vert u(t)\right\Vert _{t}$.

Inequality \eqref{eq.exp.bound} implies%
\begin{equation}
\parallel u(t)\parallel\leq\varphi_{\mathcal{U},\alpha}(t,u)\leq M_{\alpha
}(t)\parallel u(t)\parallel. \label{eq.init}%
\end{equation}

The following result is essential in the sequel.

\begin{proposition}
\label{p2}The map $\mathbb{R}_{+}\ni t\mapsto\varphi_{\mathcal{U},\alpha
}(t,u)\in\mathbb{R}_{+}$ is continuous for any fixed $u\in{C}(\mathbb{R}%
_{+},X)$. In addition, for every $u\in{C}(\mathbb{R}_{+},X)$ for which
$\lim\limits_{t\rightarrow\infty}\varphi_{\mathcal{U},\alpha}(t,u)=0$, there
exists (possibly not unique) $t_{u}\geq0$ such that
\[
\sup\limits_{t\geq0}\varphi_{\mathcal{U},\alpha}(t,u)=\varphi_{\mathcal{U}%
,\alpha}(t_{u},u).
\]

\end{proposition}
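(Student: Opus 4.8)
The plan is to establish the continuity assertion first and then deduce attainment of the supremum as a quick corollary. For the latter, suppose $\lim_{t\to\infty}\varphi_{\mathcal{U},\alpha}(t,u)=0$ and set $S=\sup_{t\geq0}\varphi_{\mathcal{U},\alpha}(t,u)$. If $S=0$ then any $t_u$ works; if $S>0$, I would choose $T>0$ with $\varphi_{\mathcal{U},\alpha}(t,u)<S/2$ for all $t>T$, so that the supremum over $\mathbb{R}_{+}$ coincides with the supremum over the compact interval $[0,T]$, which by continuity is attained at some $t_u\in[0,T]$.

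For the continuity I would write $\varphi_{\mathcal{U},\alpha}(t,u)=\sup_{\sigma\geq0}h(t,\sigma)$, where $h(t,\sigma)=e^{-\alpha\sigma}\|U(t+\sigma,t)u(t)\|$. The first step is joint continuity of $h$: splitting $U(t+\sigma,t)u(t)-U(t_0+\sigma_0,t_0)u(t_0)$ as $U(t+\sigma,t)(u(t)-u(t_0))+(U(t+\sigma,t)-U(t_0+\sigma_0,t_0))u(t_0)$, the second term tends to $0$ by $(e_3)$, while the first is dominated by $\|U(t+\sigma,t)\|\,\|u(t)-u(t_0)\|$, where $\|U(t+\sigma,t)\|\leq M_\alpha(t)e^{\alpha\sigma}$ stays bounded on a compact neighborhood (continuity of $M_\alpha$) and $\|u(t)-u(t_0)\|\to0$ by continuity of $u$. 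Since $\varphi_{\mathcal{U},\alpha}(\cdot,u)$ is then a supremum of the continuous functions $t\mapsto h(t,\sigma)$, it is automatically lower semicontinuous. The whole difficulty is therefore the upper semicontinuity: the supremum runs over the \emph{unbounded} set $\sigma\geq0$, and the exponential bound only yields $h(t,\sigma)\leq M_\alpha(t)\|u(t)\|$, with no decay in $\sigma$ allowing a uniform truncation.

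To overcome this I would exploit the cocycle identity $U(\tau,s)U(s,t)=U(\tau,t)$ to transport the tail. For $s\geq t$ and $\tau\geq s$, writing $U(\tau,s)u(s)=U(\tau,t)u(t)+U(\tau,s)\bigl(u(s)-U(s,t)u(t)\bigr)$ and combining $e^{-\alpha(\tau-s)}\|U(\tau,t)u(t)\|=e^{\alpha(s-t)}e^{-\alpha(\tau-t)}\|U(\tau,t)u(t)\|\leq e^{\alpha(s-t)}\varphi_{\mathcal{U},\alpha}(t,u)$ with $e^{-\alpha(\tau-s)}\|U(\tau,s)\|\leq M_\alpha(s)$, taking the supremum over $\tau\geq s$ gives
\[
\varphi_{\mathcal{U},\alpha}(s,u)\leq e^{\alpha(s-t)}\varphi_{\mathcal{U},\alpha}(t,u)+M_\alpha(s)\,\|u(s)-U(s,t)u(t)\|.
\]
As $u(s)\to u(t)$ and $U(s,t)u(t)\to u(t)$ when $s\to t^{+}$, the right-hand side tends to $\varphi_{\mathcal{U},\alpha}(t,u)$, which yields upper semicontinuity from the right.

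For the left-hand side I would instead split the supremum defining $\varphi_{\mathcal{U},\alpha}(s,u)=\|u(s)\|_s$ at the point $t$: for $s\leq t$,
\[
\varphi_{\mathcal{U},\alpha}(s,u)=\max\left(\sup_{s\leq\tau\leq t}e^{-\alpha(\tau-s)}\|U(\tau,s)u(s)\|,\ e^{-\alpha(t-s)}\|U(t,s)u(s)\|_{t}\right).
\]
As $s\to t^{-}$, the first term tends to $\|u(t)\|$ (the interval shrinks to $\{t\}$ and $U(\tau,s)u(s)\to u(t)$ uniformly), while in the second term $U(t,s)u(s)\to u(t)$ in $X$ and, since $\|\cdot\|\leq\|\cdot\|_t\leq M_\alpha(t)\|\cdot\|$ (immediate from \eqref{eq.exp.bound}) makes $\|\cdot\|_t$ equivalent to $\|\cdot\|$ and hence continuous, it tends to $\|u(t)\|_{t}=\varphi_{\mathcal{U},\alpha}(t,u)$. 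Because $\|u(t)\|\leq\|u(t)\|_t$, the maximum converges to $\varphi_{\mathcal{U},\alpha}(t,u)$, giving continuity from the left. Combining the two one-sided statements with lower semicontinuity yields continuity. I expect the upper semicontinuity — specifically controlling the noncompact tail of the supremum via the cocycle transport — to be the main obstacle.
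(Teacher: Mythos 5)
Your proof is correct, and its central estimate coincides with the paper's: for a point $s$ to the right of the base point $t$, you transport the tail of the supremum with the cocycle identity and bound $e^{-\alpha(\tau-s)}\Vert U(\tau,s)\Vert$ by $M_{\alpha}(s)$, arriving at $\varphi_{\mathcal{U},\alpha}(s,u)\leq e^{\alpha(s-t)}\varphi_{\mathcal{U},\alpha}(t,u)+M_{\alpha}(s)\Vert u(s)-U(s,t)u(t)\Vert$. The paper does exactly this, phrased through $V(t,s)=e^{-\alpha(t-s)}U(t,s)$ and an $\varepsilon/3$ splitting whose middle term $V(\tau,t)\bigl(u(t_{0})-V(t,t_{0})u(t_{0})\bigr)$ is your cocycle term. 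Where you genuinely diverge is in the architecture and in the treatment of the left limit. The paper analyzes only $t\geq t_{0}$ and settles the remaining inequalities with ``similarly''; you instead (i) note that lower semicontinuity is automatic, $\varphi_{\mathcal{U},\alpha}(\cdot,u)$ being a supremum of functions continuous by $(e_{3})$, the continuity of $M_{\alpha}$ and of $u$, so only upper semicontinuity needs work, and (ii) handle the approach from the left by a different device: splitting the supremum at the base point and rewriting the tail as $e^{-\alpha(t-s)}\Vert U(t,s)u(s)\Vert_{t}$, then invoking the norm equivalence $\Vert\cdot\Vert\leq\Vert\cdot\Vert_{t}\leq M_{\alpha}(t)\Vert\cdot\Vert$ (the pointwise version of \eqref{eq.init}) to pass to the limit. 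This is worth having: the two one-sided cases are not literally symmetric, since the evolution family only propagates forward, and your sup-splitting makes explicit what the paper leaves implicit, at the cost of a slightly longer case analysis. The deduction of the attained supremum is the same routine compactness argument in both. No gaps.
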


\begin{proof}
To prove the first statement we set $V(t,s)=e^{-\alpha(t-s)}U(t,s)$. It
follows that $\mathcal{V}=\left\{  V(t,s)\right\}  _{t\geq s\geq0}$ is also an
evolution family with $\parallel V(t,s)\parallel\leq M_{\alpha}(s)$, $t\geq
s\geq0$. For fixed $u\in{C}(\mathbb{R}_{+},X)$, $t_{0}\geq0$ and
$\varepsilon>0$, there exists $\delta_{1}, \delta_{2}>0$ such that
\[
\left\vert t-t_{0}\right\vert <\delta_{1}\Rightarrow M_{\alpha}\left(
t\right)  \left\Vert u\left(  t\right)  -u\left(  t_{0}\right)  \right\Vert
<{\varepsilon}/{3}\text{,}%
\]%
\[
t_{0}\leq t<t_{0}+\delta_{2}\Rightarrow M_{\alpha}\left(  t\right)  \left\Vert
u\left(  t_{0}\right)  -V\left(  t,t_{0}\right)  u\left(  t_{0}\right)
\right\Vert <{\varepsilon}/{3}\text{.}%
\]
Let $\delta=\max\left\{  \delta_{1},\delta_{2}\right\}  $ and choose $t\geq0$
with $\left\vert t-t_{0}\right\vert <\delta$. We analyze the case $t\geq
t_{0}$. For any $\tau\geq t$ we have%
\begin{align*}
\left\Vert V(\tau,t)u\left(  t\right)  \right\Vert  &  \leq\left\Vert
V(\tau,t)\left(  u\left(  t\right)  -u\left(  t_{0}\right)  \right)
\right\Vert +\left\Vert V(\tau,t)u\left(  t_{0}\right)  -V(\tau,t_{0})u\left(
t_{0}\right)  \right\Vert \\
&  \quad+\left\Vert V(\tau,t_{0})u\left(  t_{0}\right)  \right\Vert \\
&  \leq\left\Vert V(\tau,t)\right\Vert \left\Vert u\left(  t\right)  -u\left(
t_{0}\right)  \right\Vert +\left\Vert V(\tau,t)\right\Vert \left\Vert u\left(
t_{0}\right)  -V\left(  t,t_{0}\right)  u\left(  t_{0}\right)  \right\Vert \\
&  \quad+\left\Vert V(\tau,t_{0})u\left(  t_{0}\right)  \right\Vert \\
&  \leq M_{\alpha}(t)\left\Vert u\left(  t\right)  -u\left(  t_{0}\right)
\right\Vert +M_{\alpha}(t)\left\Vert u\left(  t_{0}\right)  -V\left(
t,t_{0}\right)  u\left(  t_{0}\right)  \right\Vert \\
&  \quad+\varphi_{\mathcal{U},\alpha}(t_{0},u)\\
&  \leq2\varepsilon/3+\varphi_{\mathcal{U},\alpha}(t_{0},u).
\end{align*}
Taking the supremum with respect to $\tau\geq t$ we get
\[
\varphi_{\mathcal{U},\alpha}(t,u)-\varphi_{\mathcal{U},\alpha}(t_{0}%
,u)<\varepsilon.
\]
The case $t<t_{0}$ results using the same type of arguments. Similarly one can
prove that $\varphi_{\mathcal{U},\alpha}(t_{0},u)-\varphi_{\mathcal{U},\alpha
}(t,u)<\varepsilon$, hence the map $t\mapsto\varphi_{\mathcal{U},\alpha}(t,u)$
is continuous at $t_{0}$ for arbitrary $t_{0}\geq0$, which proves the first
statement. The second one follows from the continuity of the map in question,
together with condition $\lim\limits_{t\rightarrow\infty}\varphi
_{\mathcal{U},\alpha}(t,u)=0$.
\end{proof}

For each $\alpha\in\mathcal{A}\left(  \mathcal{U}\right)  $ we set%
\[
\mathcal{C}(\mathcal{U},\alpha)=\left\{  u\in C(\mathbb{R}_{+},X):\,\lim
\limits_{t\rightarrow\infty}\varphi_{\mathcal{U},\alpha}(t,u)=\|u(0)\|=0\right\}
.
\]
Eq. \eqref{eq.init} implies
\[
C_{c,0}(\mathbb{R}_{+},X)\subset\mathcal{C}(\mathcal{U},\alpha)\subset
C_{00}(\mathbb{R}_{+},X), \label{eq.ccc}%
\]
and thus $\mathcal{C}(\mathcal{U},\alpha)$ is nonempty whenever $\alpha
\in\mathcal{A}(\mathcal{U})$. Furthermore, one can show that $\mathcal{C}%
(\mathcal{U},\alpha)$ is a Banach function space equipped with the norm
\[
\parallel u\parallel_{\mathcal{U},\alpha}=\sup\limits_{t\geq0}\text{ }%
\varphi_{\mathcal{U},\alpha}(t,u),
\]
and we call it the \emph{admissible Banach function space} corresponding to
the evolution family $\mathcal{U}$ and the admissible exponent $\alpha
\in\mathcal{A}(\mathcal{U})$.

Let us remark that if $\alpha\in\mathcal{A}\left(  \mathcal{U}\right)  $, and
$\beta\geq\alpha$, then $\beta\in\mathcal{A}\left(  \mathcal{U}\right)  $.
Moreover,
\[
\mathcal{C}(\mathcal{U},\alpha)\subset\mathcal{C}\left(  \mathcal{U}%
,\beta\right)  \text{ and }\parallel u\parallel_{\mathcal{U},\beta}%
\leq\parallel u\parallel_{\mathcal{U},\alpha}, \text{ for } u\in
\mathcal{C}(\mathcal{U},\alpha).
\]

In the next theorem we extend the notion of evolution semigroup to nonuniform
exponentially bounded evolution families.

Let us first recall  that a family
of bounded linear operators $\mathcal{T}=\left\{  T(t)\right\}  _{t\geq0}$
acting on a Banach space $X$ is a $C_{0}$-semigroup if
\begin{enumerate}
\item[($s_{1}$)] $T(0)=\mathrm{Id}$;

\item[($s_{2}$)] $T(t)T(s)=T(t+s)$ for $t,s\geq0$;

\item[($s_{3}$)] $\lim\limits_{t\rightarrow0_{+}}T(t)x=x$ for every $x\in X$.
\end{enumerate}
The (closed and densely defined) linear operator
$G:D(G)\subset X\rightarrow X$, where $Gx=\lim\limits_{t\rightarrow0_{+}}%
\frac{T(t)x-x}{t}, $ is called the (infinitesimal) \emph{generator} of the
$C_{0}$-semigroup $\mathcal{T}$.

\begin{theorem}
\label{th3}Each $\alpha\in\mathcal{A}\left(  \mathcal{U}\right)  $ defines a
$C_{0}$-semigroup $\mathcal{T}_{\alpha}=\left\{  T_{\alpha}(t)\right\}
_{t\geq0}$ on $\mathcal{C}(\mathcal{U},\alpha)$ by setting
\begin{equation}
(T_{\alpha}(t)u)(s)=%
\begin{cases}
U(s,s-t)u(s-t)\text{,} & \text{if }s>t\text{,}\\
0\text{,} & \text{if }0\leq s\leq t\text{.}%
\end{cases}
\label{eq2}%
\end{equation}
Moreover, the following estimation holds%
\begin{equation}
\parallel T_{\alpha}(t)u\parallel_{\mathcal{U},\alpha}\leq e^{\alpha
t}\parallel u\parallel_{\mathcal{U},\alpha}, \;u\in\mathcal{C}(\mathcal{U}%
,\alpha), \; t\geq0\text{.} \label{eq3}%
\end{equation}

\end{theorem}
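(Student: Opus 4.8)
The plan is to verify, in order, that each $T_{\alpha}(t)$ maps $\mathcal{C}(\mathcal{U},\alpha)$ into itself, that the bound \eqref{eq3} holds, that the algebraic semigroup laws $(s_{1})$--$(s_{2})$ hold, and finally that the family is strongly continuous $(s_{3})$. The contraction-type estimate and the strong continuity will carry all the weight; the latter is the main obstacle.

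I would prove the estimate first, since it simultaneously yields \eqref{eq3} and feeds the remaining steps. The computation rests on the cocycle identity $(e_{2})$: writing $w=T_{\alpha}(t)u$ and, for $s>t$, setting $\sigma=s-t$, one has $U(\tau,s)w(s)=U(\tau,s)U(s,\sigma)u(\sigma)=U(\tau,\sigma)u(\sigma)$, so that
\[
\varphi_{\mathcal{U},\alpha}(s,w)=\sup_{\tau\geq s}e^{-\alpha(\tau-s)}\|U(\tau,\sigma)u(\sigma)\|=e^{\alpha t}\sup_{\tau\geq s}e^{-\alpha(\tau-\sigma)}\|U(\tau,\sigma)u(\sigma)\|\leq e^{\alpha t}\varphi_{\mathcal{U},\alpha}(\sigma,u),
\]
the last inequality because the supremum is now over the smaller index set $\tau\geq s\geq\sigma$. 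Since $\varphi_{\mathcal{U},\alpha}(s,w)=0$ for $s\leq t$, taking the supremum over $s\geq0$ gives \eqref{eq3}.

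For well-definedness I would first record that every $u\in\mathcal{C}(\mathcal{U},\alpha)$ satisfies $u(0)=0$, because \eqref{eq.init} forces $\|u(0)\|\leq\varphi_{\mathcal{U},\alpha}(0,u)=0$. This makes $T_{\alpha}(t)u$ continuous: the only candidate discontinuity is at $s=t$, where the right-hand limit $U(s,s-t)u(s-t)\to U(t,0)u(0)=0$ matches the left-hand value $0$ by $(e_{3})$. Membership in $\mathcal{C}(\mathcal{U},\alpha)$ then follows from the estimate just proved: $\varphi_{\mathcal{U},\alpha}(0,T_{\alpha}(t)u)=0$ since $(T_{\alpha}(t)u)(0)=0$, while $\varphi_{\mathcal{U},\alpha}(s,T_{\alpha}(t)u)\leq e^{\alpha t}\varphi_{\mathcal{U},\alpha}(s-t,u)\to0$ as $s\to\infty$. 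The laws $(s_{1})$--$(s_{2})$ are purely algebraic: $(s_{1})$ uses $U(s,s)=Id$ together with $u(0)=0$, and $(s_{2})$ follows from $(e_{2})$ after a routine case distinction on the position of $s$ relative to $t$, $r$ and $t+r$.

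The hard part is strong continuity $(s_{3})$, namely $\|T_{\alpha}(t)u-u\|_{\mathcal{U},\alpha}\to0$ as $t\to0_{+}$. I would split $\sup_{s}\varphi_{\mathcal{U},\alpha}(s,T_{\alpha}(t)u-u)$ into the ranges $s\leq t$ and $s>t$. On $s\leq t$ the difference equals $-u(s)$, so this contribution is $\sup_{s\in[0,t]}\varphi_{\mathcal{U},\alpha}(s,u)$, which tends to $\varphi_{\mathcal{U},\alpha}(0,u)=0$ by the continuity established in Proposition \ref{p2}, and this holds for \emph{every} $u$. For the range $s>t$ a direct estimate is obstructed by the possibly unbounded factor $M_{\alpha}$, so instead I would reduce to a dense subspace. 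Truncating with continuous cutoffs $\chi_{n}$ (equal to $1$ on $[0,n]$, supported in $[0,n+1]$) gives $\chi_{n}u\in C_{c,0}(\mathbb{R}_{+},X)$ and
\[
\|u-\chi_{n}u\|_{\mathcal{U},\alpha}\leq\sup_{s>n}\varphi_{\mathcal{U},\alpha}(s,u)\longrightarrow0,
\]
so $C_{c,0}(\mathbb{R}_{+},X)$ is dense in $\mathcal{C}(\mathcal{U},\alpha)$; together with the uniform bound \eqref{eq3} on compact time intervals it suffices to check $(s_{3})$ on $C_{c,0}$. For compactly supported $u$ the $s>t$ term splits, via $U(\tau,s-t)=U(\tau,s)U(s,s-t)$, into a piece dominated by the uniform continuity of $u$ and the local boundedness of $M_{\alpha}$, and a piece of the form $\sup_{s}\varphi_{\mathcal{U},\alpha}\bigl(s,(U(s,\cdot-t)-Id)u\bigr)$ that I would show tends to $0$ by compactness: the map $(s,t)\mapsto U(s,s-t)u(s)$ is jointly continuous and reduces to $u(s)$ at $t=0$, so its deviation from $u(s)$ is uniformly small over the compact support. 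This establishes $(s_{3})$ and completes the verification that $\mathcal{T}_{\alpha}$ is a $C_{0}$-semigroup.
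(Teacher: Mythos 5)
Your proposal is correct and follows essentially the same route as the paper: the key estimate $\varphi_{\mathcal{U},\alpha}(s,T_{\alpha}(t)u)\leq e^{\alpha t}\varphi_{\mathcal{U},\alpha}(s-t,u)$ via the cocycle identity, which gives both well-definedness and \eqref{eq3}, followed by density of $C_{c,0}(\mathbb{R}_{+},X)$ and the reduction of strong continuity to compactly supported functions, where $M_{\alpha}$ is bounded and joint continuity of $(t,s)\mapsto U(t,s)x$ does the work. The extra details you supply (continuity of $T_{\alpha}(t)u$ at $s=t$, the explicit treatment of the range $s\leq t$) are points the paper leaves implicit, but the argument is the same.
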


\begin{proof}
Evidently $T_{\alpha}(0)=\mathrm{Id}$ and $T_{\alpha}(t)T_{\alpha
}(s)=T_{\alpha}(t+s)$, for all $t,s\geq0$. It remains to prove that the map
$T_{\alpha}(t)$ in (\ref{eq2})\ is well defined on $\mathcal{C}(\mathcal{U}%
,\alpha)$, and the norm $\parallel T_{\alpha}(t)u-u\parallel_{\mathcal{U}%
,\alpha}\rightarrow0$ as $t\rightarrow0_{+}$, for each fixed $u\in
\mathcal{C}(\mathcal{U},\alpha)$. Pick $t\geq0$ and $u\in\mathcal{C}%
(\mathcal{U},\alpha)$. For arbitrary $s\geq t$ we have
\begin{align*}
\varphi_{\mathcal{U},\alpha}(s,T_{\alpha}(t)u)  &  =\sup\limits_{\tau\geq
s}e^{-\alpha(\tau-s)}\parallel U(\tau,s-t)u(s-t)\parallel\\
&  =e^{\alpha t}\sup\limits_{\tau\geq s}e^{-\alpha\lbrack\tau-(s-t)]}\parallel
U(\tau,s-t)u(s-t)\parallel\\
&  \leq e^{\alpha t}\varphi_{\mathcal{U},\alpha}(s-t,u),
\end{align*}
therefore $\varphi_{\mathcal{U},\alpha}(s,T_{\alpha}(t)u)\rightarrow0$ as
$s\rightarrow\infty$, which leads to $T_{\alpha}(t)u\in\mathcal{C}%
(\mathcal{U},\alpha)$. Notice that the above estimation also proves inequality
(\ref{eq3}).

We now claim that the space $C_{c,0}(\mathbb{R}_{+},X)$ is dense in
$\mathcal{C}(\mathcal{U},\alpha)$ with respect to the norm $\parallel
\cdot\parallel_{\mathcal{U},\alpha}$. For any fixed $u\in\mathcal{C}%
(\mathcal{U},\alpha)$ and any non-negative integer $n\in\mathbb{N}$, let us
consider a continuous function $\alpha_{n}:\mathbb{R}_{+}\rightarrow
\lbrack0,1]$ such that
\[
\alpha_{n}(t)=1,\text{ for all }t\in\lbrack0,n],\text{ and }\alpha
_{n}(t)=0,\text{ for all }t\geq n+1.
\]
Putting $u_{n}=\alpha_{n}u$, we notice that $u_{n}\in C_{c,0}(\mathbb{R}%
_{+},X)$. We claim that  $$\underset{n\rightarrow\infty}{\lim
}\parallel u_{n}-u\parallel_{\mathcal{U},\alpha}=0.$$ Indeed, as $\,\lim
\limits_{t\rightarrow\infty}\varphi_{\mathcal{U},\alpha}(t,u)=0$, it follows
that for each $\varepsilon>0$ there exists $\delta>0$ such that for $t>\delta$
we have $\varphi_{\mathcal{U},\alpha}(t,u)<\varepsilon/2$. Set $n_{0}%
=[\delta]+1$ and choose $n\geq n_{0}$. The definition of the map $\alpha_{n}$
readily implies
\[
\parallel u_{n}-u\parallel_{\mathcal{U},\alpha}=\sup\limits_{t\geq n}%
\varphi_{\mathcal{U},\alpha}(t,u_{n}-u)\leq\sup\limits_{t\geq n}%
\varphi_{\mathcal{U},\alpha}(t,u)<\varepsilon,
\]
which concludes the claim.

For the second statement, pick $u\in C_{c,0}(\mathbb{R}_{+},X)$. There exist
$a$, $b\geq0$, $a<b$ such that $supp(T_{\alpha}(t)u-u)\subset\lbrack a,b]$,
for sufficiently small $t\geq0$. For such $t$ we have
\begin{align*}
\parallel T_{\alpha}(t)u-u\parallel_{\mathcal{U},\alpha}  &  =\sup
\limits_{s\geq0}\varphi_{\mathcal{U},\alpha}(s,T_{\alpha}(t)u-u)\\
&  \leq\sup\limits_{s\geq0}M_{\alpha}(s)\parallel(T_{\alpha}%
(t)u)(s)-u(s)\parallel\\
&  =\sup\limits_{s\in supp(T_{\alpha}(t)u-u)}M_{\alpha}(s)\parallel(T_{\alpha
}(t)u)(s)-u(s)\parallel\\
&  \leq K_{\alpha}\sup\limits_{s\in supp(T_{\alpha}(t)u-u)}\parallel
(T_{\alpha}(t)u)(s)-u(s)\parallel,
\end{align*}
where $K_{\alpha}=\max\limits_{s\in\lbrack a,b]}M_{\alpha}(s)$. Using standard
arguments (ex. \cite{Rau}), one can easily prove that $\parallel T_{\alpha
}(t)u-u\parallel_{\mathcal{U},\alpha}\rightarrow0$ as $t\rightarrow0_{+}$, and
this completes the proof.
\end{proof}

The $C_{0}$-semigroup $\mathcal{T}_{\alpha}$ is the analogue of the evolution
semigroup on $C_{00}(\mathbb{R}_{+},X)$ corresponding to an $\alpha
$-nonuniform exponentially bounded evolution family. We denote $G_{\mathcal{U}%
,\alpha}$ the generator of $\mathcal{T}_{\alpha}$.

\begin{remark}
\label{rem.gen} If the Banach function spaces $\mathcal{C}(\mathcal{U}%
,\alpha)$ and $\mathcal{C}(\mathcal{U},\beta)$ coincide for some admissible
exponents $\alpha,\beta\in\mathcal{A}(\mathcal{U})$, then $\mathcal{T}%
_{\alpha}=\mathcal{T}_{\beta}$ and $G_{\mathcal{U},\alpha}=G_{\mathcal{U}%
,\beta}$.
\end{remark}

Next proposition illustrates the connection between the $C_{0}$-semigroup
$\mathcal{T}_{\alpha}$ and the evolution semigroup on $C_{00}(\mathbb{R}%
_{+},X)$ defined in (\ref{eq.semigroup}).

\begin{proposition}
\label{p1} For $\alpha\in\mathcal{A}\left(  \mathcal{U}\right)  $, the Banach
spaces $\mathcal{C}(\mathcal{U},\alpha)$ and $C_{00}(\mathbb{R}_{+},X)$
coincide if and only if $\alpha$ is a strict exponent. In this case, the
$C_{0}$-semigroup $\mathcal{T}_{\alpha}$ coincides with the evolution
semigroup $\mathcal{T}$ on $C_{00}(\mathbb{R}_{+},X)$.
\end{proposition}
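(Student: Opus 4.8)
The plan is to prove the equivalence in two implications and then dispose of the final identification as an easy consequence of the two defining formulas.

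For the \emph{easy direction} ($\alpha$ strict $\Rightarrow$ the spaces coincide) I would start from the two-sided estimate \eqref{eq.init}. Strictness of $\alpha$ means precisely that the weight $M_\alpha(s)$ in \eqref{eq.exp.bound} may be taken constant, say $M_\alpha(s)\equiv M$; feeding this into \eqref{eq.init} gives $\|u(t)\|\le\varphi_{\mathcal{U},\alpha}(t,u)\le M\|u(t)\|$ for every $t\ge 0$. From these comparisons, $\lim_{t\to\infty}\varphi_{\mathcal{U},\alpha}(t,u)=0$ is equivalent to $\lim_{t\to\infty}\|u(t)\|=0$, while $\varphi_{\mathcal{U},\alpha}(0,u)=0$ is equivalent to $u(0)=0$; hence $\mathcal{C}(\mathcal{U},\alpha)$ and $C_{00}(\mathbb{R}_+,X)$ contain the same functions. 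Taking suprema over $t$ yields $\|u\|_\infty\le\|u\|_{\mathcal{U},\alpha}\le M\|u\|_\infty$, so the norms are equivalent and the spaces coincide as Banach spaces.

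The \emph{substantial direction} (coincidence $\Rightarrow$ $\alpha$ strict) is where the real work lies, and I expect it to be the main obstacle. I would argue by contradiction: suppose the spaces coincide yet $\alpha$ is not strict. First, coincidence as sets already forces the norms to be comparable: by the left inequality in \eqref{eq.init} the identity map $(\mathcal{C}(\mathcal{U},\alpha),\|\cdot\|_{\mathcal{U},\alpha})\to(C_{00}(\mathbb{R}_+,X),\|\cdot\|_\infty)$ is bounded, and by \eqref{eq.ccc} it always lands in $C_{00}$; if it is also surjective, the bounded inverse theorem produces a constant $C$ with $\|u\|_{\mathcal{U},\alpha}\le C\|u\|_\infty$. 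Next, writing $V(\tau,t)=e^{-\alpha(\tau-t)}U(\tau,t)$ and $N(s)=\sup_{\tau\ge s}\|V(\tau,s)\|$, I observe that $\alpha$ is strict exactly when $\sup_{s\ge 0}N(s)<\infty$; since $N(s)\le M_\alpha(s)$ is locally bounded, non-strictness yields points $s_n\to\infty$ with $N(s_n)\to\infty$. For each $n$ I pick $x_n$ with $\|x_n\|=1$ and $\tau_n\ge s_n$ realizing, up to a factor $2$, the supremum defining $N(s_n)$, together with a scalar bump $\chi_n\in C_{c,0}(\mathbb{R}_+,[0,1])$ with $\chi_n(s_n)=1$ and support avoiding $0$. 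Setting $u_n=\chi_n x_n\in C_{c,0}(\mathbb{R}_+,X)\subset\mathcal{C}(\mathcal{U},\alpha)$, the quantity $\varphi_{\mathcal{U},\alpha}(s_n,u_n)$ depends only on $u_n(s_n)=x_n$, so by \eqref{eq6} it equals $\sup_{\tau\ge s_n}\|V(\tau,s_n)x_n\|\ge N(s_n)/2$. Thus $\|u_n\|_\infty=1$ while $\|u_n\|_{\mathcal{U},\alpha}\ge N(s_n)/2\to\infty$, contradicting $\|u\|_{\mathcal{U},\alpha}\le C\|u\|_\infty$. Hence coincidence forces $\sup_s N(s)<\infty$, i.e.\ $\alpha$ is strict.

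Finally, for the \emph{last assertion} I would merely compare the defining formulas \eqref{eq.semigroup} and \eqref{eq2}. They already agree for $s>t$, and on $0\le s\le t$ the uniform semigroup returns $U(s,0)u(0)$ while the nonuniform one returns $0$; but every $u\in\mathcal{C}(\mathcal{U},\alpha)=C_{00}(\mathbb{R}_+,X)$ satisfies $u(0)=0$ (directly for $C_{00}$, and for $\mathcal{C}(\mathcal{U},\alpha)$ via $\|u(0)\|\le\varphi_{\mathcal{U},\alpha}(0,u)=0$), so $U(s,0)u(0)=0$ as well. Therefore $T(t)u=T_\alpha(t)u$ for all $t\ge 0$ and all such $u$, giving $\mathcal{T}_\alpha=\mathcal{T}$ on $C_{00}(\mathbb{R}_+,X)$. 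The only delicate point in the whole argument is the extraction of the blow-up sequence $s_n$ and the accompanying bump construction in the substantial direction; the remaining steps are direct manipulations of \eqref{eq.init} and of the two defining formulas.
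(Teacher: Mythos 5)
Your proof is correct, but the necessity direction takes a genuinely different route from the paper's. The paper never invokes the bounded inverse theorem: assuming $\alpha$ is not strict, it extracts $t_n\ge s_n$ and unit vectors $x_n$ with $e^{-\alpha(t_n-s_n)}\|U(t_n,s_n)x_n\|>n$, shows $\{s_n\}$ must be unbounded (else the continuity of $M_\alpha$ gives a contradiction), rescales to $y_n=x_n/\sqrt{n}$, and builds a single piecewise-linear function $u_y$ with $u_y(s_n)=y_n$; since $y_n\to 0$ this $u_y$ lies in $C_{00}(\mathbb{R}_+,X)$, yet $\varphi_{\mathcal{U},\alpha}(s_n,u_y)\ge\sqrt{n}$ shows it is not in $\mathcal{C}(\mathcal{U},\alpha)$, contradicting set equality directly. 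You instead upgrade set coincidence to norm equivalence via the bounded inverse theorem and then defeat the resulting bound $\|u\|_{\mathcal{U},\alpha}\le C\|u\|_\infty$ with compactly supported bumps $\chi_n x_n$ concentrated where $N(s_n)=\sup_{\tau\ge s_n}\|V(\tau,s_n)\|$ blows up. Both arguments are sound; yours is shorter and as a bonus shows that mere equality of the underlying sets already forces equivalence of the norms, but it leans on the completeness of $(\mathcal{C}(\mathcal{U},\alpha),\|\cdot\|_{\mathcal{U},\alpha})$, which the paper asserts without proof, whereas the paper's explicit counterexample function is self-contained and exhibits a concrete element of $C_{00}(\mathbb{R}_+,X)\smallsetminus\mathcal{C}(\mathcal{U},\alpha)$. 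Your sufficiency direction and the verification of $\mathcal{T}_\alpha=\mathcal{T}$ (via $u(0)=0$ killing the branch $U(s,0)u(0)$) match the paper's reasoning; the paper leaves the latter implicit.
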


\begin{proof}
\emph{Necessity}. Let us assume that $\mathcal{C}(\mathcal{U},\alpha
)=C_{00}(\mathbb{R}_{+},X)$, for some $\alpha\in\mathcal{A}\left(
\mathcal{U}\right)  $. If $\alpha$ is not a strict exponent, then for each
positive integer $n\in\mathbb{N}^{\ast}$ there exist $t_{n}\geq s_{n}\geq0$
and $x_{n}\in X$ with $\parallel x_{n}\parallel=1$, such that
\[
\parallel U(t_{n},s_{n})x_{n}\parallel>ne^{\alpha\left(  t_{n}-s_{n}\right)
}.
\]
If the sequence $\left(  s_{n}\right)  _{n\in\mathbb{N}^{\ast}}$ is bounded,
say $s_{n}\leq k$, then inequality
\[
n<e^{-\alpha\left(  t_{n}-s_{n}\right)  }\parallel U(t_{n},s_{n}%
)x_{n}\parallel\leq M_{\alpha}\left(  s_{n}\right)  \leq\underset{0\leq s\leq
k}{\sup}M_{\alpha}\left(  s\right)
\]
leads to a contradiction, $n<\underset{0\leq s\leq k}{\sup}M_{\alpha}\left(
s\right)  $ for all $n\in\mathbb{N}^{\ast}$. Thus, without loss of generality
one can always assume that the sequence $\left(  s_{n}\right)  _{n\in
\mathbb{N}^{\ast}}$ is strictly increasing, unbounded and let us put
$t_{0}=s_{0}=0$, $x_{0}=0$. Setting $y_{n}=\frac{x_{n}}{\sqrt{n}}$, $n\geq1$
and $y_{0}=0$, one gets
\[
e^{-\alpha\left(  t_{n}-s_{n}\right)  }\parallel U(t_{n},s_{n})y_{n}%
\parallel\geq\sqrt{n}\text{, }n\in\mathbb{N}\text{.}\
\]
Consider $u_{y}:\mathbb{R}_{+}\rightarrow X$ by
\[
u_{y}\left(  s\right)  =\frac{s\left(  y_{n+1}-y_{n}\right)  }{s_{n+1}-s_{n}%
}+\frac{s_{n+1}y_{n}-s_{n}y_{n+1}}{s_{n+1}-s_{n}},\text{ if }s\in\left[
s_{n},s_{n+1}\right]  ,n\in\mathbb{N}\text{.}%
\]
We notice that $u_{y}\left(  s_{n}\right)  =$ $y_{n}$ for each $n\in
\mathbb{N}$,\ that results in $u_{y}\in C_{00}(\mathbb{R}_{+},X)$. From
estimation%
\[
\varphi_{\mathcal{U},\alpha}(s_{n},u_{y})\geq e^{-\alpha\left(  t_{n}%
-s_{n}\right)  }\parallel U(t_{n},s_{n})y_{n}\parallel\geq\sqrt{n},
\]
we deduce $u_{y}\notin\mathcal{C}(\mathcal{U},\alpha)$, that is $\mathcal{C}%
(\mathcal{U},\alpha)\neq C_{00}(\mathbb{R}_{+},X)$, which is a contradiction.

\emph{Sufficiency}. If $\alpha$ is a strict exponent, then there exists
$M_{\alpha}>0$ such that $$\parallel U(t,s)\parallel\leq M_{\alpha}%
e^{\alpha(t-s)}, \text{ for }t\geq s\geq0.$$ Assumption $u\in C_{00}(\mathbb{R}%
_{+},X)$ implies $\varphi_{\mathcal{U},\alpha}(t,u)\leq M_{\alpha}\parallel
u(t)\parallel\rightarrow0$ as $t\rightarrow\infty$, hence $u\in\mathcal{C}%
(\mathcal{U},\alpha)$.
\end{proof}

In the next example we identify the set $\mathcal{A}(\mathcal{U})$, for a
given evolution family $\mathcal{U}$. We point out the connections between the
admissible Banach function spaces $\mathcal{C}(\mathcal{U},\alpha)$, when
$\alpha$ varies on $\mathcal{A}(\mathcal{U})$. Let us emphasize that in this
particular case there are infinitely many admissible Banach function spaces.

\begin{example}
\label{ex2}For $t\geq s>0$ we set
\[
E\left(  t,s\right)  =s\left(\sqrt{2}+\sin\ln s\right)  -t\left(  \sqrt
{2}+\sin\ln t\right),
\]
$E(t,0)=-t\left(\sqrt{2}+\sin\ln t\right)$ for $t>0$, and $E(0,0)=0$.
We consider the evolution family $U\left(  t,s\right)  =e^{E\left(
t,s\right)  }\mathrm{Id}$. We claim that $\mathcal{U}$ has the following properties:

\begin{enumerate}
\item $\mathcal{A}\left(  \mathcal{U}\right)  =\left[  1-\sqrt{2}%
,\infty\right)  $;

\item $\mathcal{U}$ is $\alpha$-uniform exponentially bounded, if $\alpha
\geq0$;

\item $\mathcal{C}(\mathcal{U},\alpha)=C_{00}(\mathbb{R}_{+},X)$, if
$\alpha\geq0$;

\item $\mathcal{U}$ is not $\alpha$-uniform exponentially bounded, if
$\alpha\in\left[  1-\sqrt{2},0\right)  $;

\item $\mathcal{C}(\mathcal{U},\alpha)\neq\mathcal{C}(\mathcal{U},\beta)$, for
all different $\alpha,\beta\in\lbrack1-\sqrt{2},0)$.
\end{enumerate}
\end{example}

\begin{proof}
Since $\lim\limits_{s\searrow0}s\sin\ln s=0$, we make the convention  $0\,\sin\ln 0=0$.
For any fixed $\alpha\in\mathbb{R}$, let us denote
\begin{align*}
E_{\alpha}\left(  t,s\right) &=E\left(  t,s\right)  -\alpha\left(  t-s\right)\\
&=s\left(  \alpha+\sqrt{2}+\sin\ln s\right)  -t\left(  \alpha+\sqrt{2}+\sin\ln
t\right),\;t\geq s\geq 0  \text{.}%
\end{align*}
If $\alpha<1-\sqrt{2}$, then $\alpha=1-\sqrt{2}-a$, for some $a>0$. In this
case we have
\begin{align*}
E_{\alpha}\left(  t,s\right)  =s\left(  1-a+\sin\ln s\right)  -t\left(
1-a+\sin\ln t\right), \;t\geq s\geq0.
\end{align*}
Setting $t=e^{2n\pi+\frac{3\pi}{2}}$, $n\in\mathbb{N}$, and $s=0$ we get
$E_{\alpha}(t,s) = e^{2n\pi+\frac{3\pi}{2}} a \rightarrow\infty$ as
$n\to\infty$, which implies that $\alpha\notin\mathcal{A}\left(
\mathcal{U}\right)  $, and thus $\mathcal{A}\left(  \mathcal{U}\right)
\subset[1-\sqrt{2},\infty)$. On the other hand, we have
\[
E_{1-\sqrt{2}}(t,s)=s (1+\sin\ln{s})-t(1+\sin\ln{t})\leq s (1+\sin\ln{s}),\;t\geq s\geq 0.
\]
Therefore, $1-\sqrt{2}\in\mathcal{A}\left(  \mathcal{U}\right)  $, and
consequently $[1-\sqrt{2},\infty)\subset\mathcal{A}\left(  \mathcal{U}\right)
$. We conclude that $\mathcal{A}\left(  \mathcal{U}\right)  =\left[
1-\sqrt{2},\infty\right)  $.

For $\alpha\in\mathcal{A}(\mathcal{U})$ let us put $f_{\alpha}\left(
t\right)  =t\left(  \alpha+\sqrt{2}+\sin\ln t\right)  $, $t\geq0$. The
derivative of $f_{\alpha}$ is
\[
f_{\alpha}^{\prime}\left(  t\right)  =\alpha+\sqrt{2}\left[  1+\sin\left(
\frac{\pi}{4}+\ln t\right)  \right]  .
\]

If $\alpha\geq0$, then $f_{\alpha}^{\prime}\left(  t\right)  \geq0$,
therefore
\[
E_{\alpha}\left(  t,s\right)  =f_{\alpha}\left(  s\right)  -f_{\alpha}\left(
t\right)  =f_{\alpha}^{\prime}\left(  \theta_{t,s}\right)  \left(  s-t\right)
\leq0,\text{ for }t>s\geq0,
\]
and so $\left\Vert U\left(  t,s\right)  \right\Vert \leq e^{\alpha\left(
t-s\right)  }$. It follows that $\mathcal{U}$ is $\alpha$-uniform
exponentially bounded. Proposition \ref{p1} implies now that $\mathcal{C}%
(\mathcal{U},\alpha)=C_{00}(\mathbb{R}_{+},X)$, for $\alpha\geq0$.

If $\alpha\in\left[  1-\sqrt{2},0\right)  $, then
\[
f_{\alpha}^{\prime}\left(  t\right)  =\sqrt{2}\left[  \sin\theta+\sin\left(
\frac{\pi}{4}+\ln t\right)  \right]  ,
\]
where $\theta=\arcsin\left(  \frac{\sqrt{2}+\alpha}{\sqrt{2}}\right)
\in\left[  \frac{\pi}{4},\frac{\pi}{2}\right)  $. Equation $f_{\alpha}%
^{\prime}\left(  t\right)  =0$ generates two families of solutions:
\[
t_{k}^{\prime}=e^{2k\pi-\frac{\pi}{4}-\theta}\text{ and }t_{k}^{\prime\prime
}=e^{\left(  2k+1\right)  \pi-\frac{\pi}{4}+\theta},\;k\in\mathbb{N}.
\]
Notice that $t_{k}^{\prime}<e^{2k\pi}<t_{k}^{\prime\prime}$. As $f_{\alpha
}^{\prime}\left(  e^{2k\pi}\right)  =\sqrt{2}\left(  \sin\theta+\sin\frac{\pi
}{4}\right)  >0$, the map $f_{\alpha}$ is increasing on $(t_{k}^{\prime}%
,t_{k}^{\prime\prime})$. Notice that for all $\eta$ with $\frac{\pi}{4}%
\leq\theta<\eta<\frac{\pi}{2}$, we have $t_{k}^{\prime\prime}<e^{\left(
2k+1\right)  \pi-\frac{\pi}{4}+\eta}<t_{k+1}^{\prime}$. Since $f_{\alpha
}^{\prime}\left(  e^{\left(  2k+1\right)  \pi-\frac{\pi}{4}+\eta}\right)
=\sqrt{2}\left(  \sin\theta-\sin\eta\right)  <0$, the map $f_{\alpha}$ is
decreasing on $(t_{k}^{\prime\prime},t_{k+1}^{\prime})$. To sum up,
$f_{\alpha}$ is increasing on $(t_{k}^{\prime},t_{k}^{\prime\prime})$ and
decreasing on $(t_{k}^{\prime\prime},t_{k+1}^{\prime})$, for each fixed
$k\in\mathbb{N}$.

Let us denote $s_{k}=e^{\left(  2k+1\right)  \pi+\frac{\pi}{4}}$. Then
$s_{k}\in\left(  t_{k}^{\prime\prime},t_{k+1}^{\prime}\right)  $, and we
deduce that $f_{\alpha}$ is decreasing on $[s_{k},t_{k+1}^{\prime})$.
Observing that the sequence $\left\{  f_{\alpha}(t_{k}^{\prime})\right\}
_{k\in\mathbb{N}}$ is increasing, we conclude that $\underset{t\geq s_{k}%
}{\inf}f_{\alpha}(t)=f_{\alpha}\left(  t_{k+1}^{\prime}\right)  $ and this
yields%
\begin{align*}
\underset{t\geq s_{k}}{\sup}E_{\alpha}\left(  t,s_{k}\right)   &  =f_{\alpha
}(s_{k})-\underset{t\geq s_{k}}{\inf}f_{\alpha}(t)=f_{\alpha}(s_{k}%
)-f_{\alpha}\left(  t_{k+1}^{\prime}\right) \\
&  =f_{\alpha}\left(  e^{\left(  2k+1\right)  \pi+\frac{\pi}{4}}\right)
-f_{\alpha}\left(  e^{(2k+1)\pi+\frac{3\pi}{4}-\theta}\right) \\
&  =\sqrt{2}e^{\left(  2k+1\right)  \pi+\frac{\pi}{4}}\left[  \sin
\theta\left(  1-\frac{1}{2}e^{\frac{\pi}{2}-\theta}\right)  +\frac{1}{2}%
\cos\theta e^{\frac{\pi}{2}-\theta}-\frac{1}{2}\right]  .
\end{align*}
If we put $\varphi\left(  \theta\right)  =\sin\theta\left(  1-\frac{1}%
{2}e^{\frac{\pi}{2}-\theta}\right)  +\frac{1}{2}\cos\theta e^{\frac{\pi}%
{2}-\theta}-\frac{1}{2}$, $\theta\in\left[  \frac{\pi}{4},\frac{\pi}%
{2}\right)  $, then the derivative $\varphi^{\prime}\left(  \theta\right)
=\cos\theta\left(  1-e^{\frac{\pi}{2}-\theta}\right)  <0$, hence
$\varphi\left(  \theta\right)  >\lim\limits_{\xi\rightarrow\pi/2}%
\varphi\left(  \xi\right)  =0$. We deduce that
\[
\underset{k\rightarrow\infty}{\lim}\,\underset{t\geq s_{k}}{\sup}E_{\alpha
}\left(  t,s_{k}\right)  =\underset{k\rightarrow\infty}{\lim}\sqrt
{2}e^{\left(  2k+1\right)  \pi+\frac{\pi}{4}}\varphi\left(  \theta\right)
=\infty.
\]
We conclude that the evolution family $\mathcal{U}$ is not $\alpha$-uniform
exponentially bounded, and thus $\mathcal{C}(\mathcal{U},\alpha)\neq
C_{00}(\mathbb{R}_{+},X)$, for $\alpha\in\left[  1-\sqrt{2},0\right)  $.

Picking $1-\sqrt{2}\leq\alpha<\beta<0$, we denote
\[
\theta=\arcsin\left(  \frac{\sqrt{2}+\alpha}{\sqrt{2}}\right)  \text{ and
}\zeta=\arcsin\left(  \frac{\sqrt{2}+\beta}{\sqrt{2}}\right)  .
\]
Notice that $\frac{\pi}{4}\leq\theta<\zeta<\frac{\pi}{2}$. For any $s\geq0$,
we define $g_{\alpha}\left(  s\right)  =\underset{t\geq s}{\sup}E_{\alpha
}\left(  t,s\right)  <\infty$, and $g_{\beta}\left(  s\right)  =\underset
{t\geq s}{\sup}E_{\beta}\left(  t,s\right)  <\infty$. Set $g\left(  s\right)
=g_{\alpha}\left(  s\right)  -g_{\beta}\left(  s\right)  $. Since
\begin{align*}
g\left(  s_{k}\right)   &  =\underset{t\geq s_{k}}{\sup}E_{\alpha}\left(
t,s_{k}\right)  -\underset{t\geq s_{k}}{\sup}E_{\beta}\left(  t,s_{k}\right)
\\
&  =\sqrt{2}e^{\left(  2k+1\right)  \pi+\frac{\pi}{4}}\left[  \varphi\left(
\theta\right)  -\varphi\left(  \zeta\right)  \right]  \rightarrow\infty\text{
as }k\rightarrow\infty,
\end{align*}
it follows that the map $s\mapsto g\left(  s\right)  $ is unbounded.

Let $h$ be a positive function with
\[
h(0)=0,\,\underset{s\rightarrow\infty}{\lim}h\left(  s\right)  =0,\text{ and
}\underset{s\rightarrow\infty}{\lim}h\left(  s\right)  e^{g\left(  s\right)
}=\infty.
\]
We claim that the map $u\left(  s\right)  =h\left(  s\right)  e^{-g_{\beta
}\left(  s\right)  }$ belongs to $\mathcal{C}(\mathcal{U},\beta)$, but not to
$\mathcal{C}(\mathcal{U},\alpha)$. Indeed, we have
\begin{align*}
\varphi_{\mathcal{U},\beta}\left(  s,u\right)   &  =\underset{t\geq s}{\sup
}\,e^{-\beta\left(  t-s\right)  }\left\Vert U\left(  t,s\right)  u\left(
s\right)  \right\Vert =\underset{t\geq s}{\sup}\,e^{E_{\beta}\left(
t,s\right)  }h\left(  s\right)  e^{-g_{\beta}\left(  s\right)  }\\
&  =h\left(  s\right)  \rightarrow0\text{ as }s\rightarrow\infty,
\end{align*}
thus $u\in\mathcal{C}(\mathcal{U},\beta)$. Similarly one gets
\[
\varphi_{\mathcal{U},\alpha}\left(  s,u\right)  =e^{g_{\alpha}\left(
s\right)  }h\left(  s\right)  e^{-g_{\beta}\left(  s\right)  }=h\left(
s\right)  e^{g\left(  s\right)  }\rightarrow\infty\text{ as }s\rightarrow
\infty,
\]
and this yields $u\notin\mathcal{C}(\mathcal{U},\alpha)$. We conclude that the
inclusion $\mathcal{C}(\mathcal{U},\alpha)\subset\mathcal{C}(\mathcal{U}%
,\beta)$ is strict, therefore the evolution family $\mathcal{U}$ has
infinitely many admissible Banach function spaces.
\end{proof}

\bigskip The following result is taken from the classical theory of $C_{0}%
$-semigroups (see, for instance \cite[Lemma 1.3]{Eng}):

\begin{lemma}
\label{lem.semigroup} Let $\mathcal{T}=\left\{  T(t)\right\}  _{t\geq0}$ be a
$C_{0}$-semigroup on a Banach space $E$, and $G$ its generator. If $x,y\in E$,
then $x\in D(G)$ and $Gx=y$ if and only if
\[
T(t)x-x=\int_{0}^{t}T(\xi)yd\xi,\;t\geq0.
\]

\end{lemma}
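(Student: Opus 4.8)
The plan is to prove both implications directly from the definition of the generator $G$ together with the basic regularity of semigroup orbits, using nothing beyond strong continuity and the local boundedness of $\{T(t)\}_{t\geq 0}$ on compact time intervals.

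For the \emph{sufficiency} I would suppose that $T(t)x-x=\int_{0}^{t}T(\xi)y\,d\xi$ holds for every $t\geq0$, divide by $t>0$, and obtain
\[
\frac{T(t)x-x}{t}=\frac{1}{t}\int_{0}^{t}T(\xi)y\,d\xi .
\]
The right-hand side is the average of the map $\xi\mapsto T(\xi)y$, which is continuous by strong continuity of the semigroup; hence it converges to $T(0)y=y$ as $t\rightarrow0_{+}$. Therefore the difference quotient on the left has a limit, and by the very definition of the infinitesimal generator this means precisely that $x\in D(G)$ and $Gx=y$. This direction is entirely routine.

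For the \emph{necessity} I would suppose $x\in D(G)$ with $Gx=y$, and the key step is to show that the orbit $\xi\mapsto T(\xi)x$ is differentiable with derivative $T(\xi)y$. The right derivative is immediate, since
\[
\frac{T(\xi+h)x-T(\xi)x}{h}=T(\xi)\,\frac{T(h)x-x}{h}\longrightarrow T(\xi)Gx\qquad(h\rightarrow0_{+}),
\]
using continuity of the bounded operator $T(\xi)$ and $x\in D(G)$. For the left derivative at $\xi>0$ I would write $\dfrac{T(\xi)x-T(\xi-h)x}{h}=T(\xi-h)\,\dfrac{T(h)x-x}{h}$ and split it as
\[
T(\xi-h)\left(\frac{T(h)x-x}{h}-Gx\right)+T(\xi-h)Gx .
\]
The first term vanishes because $\|T(\xi-h)\|$ stays bounded for small $h$ while $\frac{T(h)x-x}{h}\to Gx$, and the second term tends to $T(\xi)Gx$ by strong continuity. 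Once differentiability with continuous derivative $\xi\mapsto T(\xi)y$ is established, integrating from $0$ to $t$ via the fundamental theorem of calculus for Banach-space-valued functions yields the claimed identity $T(t)x-x=\int_{0}^{t}T(\xi)y\,d\xi$.

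I expect the only genuinely delicate point to be the existence of the \emph{left} derivative of the orbit, where one must control $\|T(\xi-h)\|$ as $h\rightarrow0_{+}$; this is exactly where the local boundedness of the strongly continuous family (itself a consequence of the uniform boundedness principle) enters. Everything else reduces to the defining limit of $G$ and elementary properties of the Bochner integral.
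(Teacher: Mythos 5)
Your proof is correct and is precisely the standard argument for this classical fact; the paper itself gives no proof, simply citing Pazy (Theorem 2.4) and Engel--Nagel (Lemma 1.3), and the argument in those references is the same one you give (averaging the continuous orbit $\xi\mapsto T(\xi)y$ for sufficiency; differentiating the orbit of $x\in D(G)$ from both sides, with local boundedness of $\|T(\cdot)\|$ handling the left derivative, and then applying the fundamental theorem of calculus for necessity). No gaps.
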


Let us substitute $E=\mathcal{C}(\mathcal{U},\alpha)$, $x=u$, $y=-f$ and
$G=G_{\mathcal{U},\alpha}$. This rewrites as follows: if $u$, $f\in
\mathcal{C}(\mathcal{U},\alpha)$, then $u\in D(G_{\mathcal{U},\alpha})$ and
$G_{\mathcal{U},\alpha}u=-f$ if and only if
\[
u(t)=\int_{0}^{t}U(t,\xi)f(\xi)d\xi\text{, }t\geq0\text{.} \label{eq.int}%
\]

\section{Nonuniform exponential stability and invertibility of generators}

\label{sec.nonuniform}

In this section we completely characterize nonuniform exponential stability of
an evolution family $\mathcal{U}$, in terms of invertibility of the
infinitesimal generators $G_{\mathcal{U},\alpha}$, $\alpha\in\mathcal{A}%
(\mathcal{U})$. We specify that throughout our paper the term ``invertible
operator'' refers to a linear map which is algebraically invertible, with
bounded inverse. We expose below what we consider to be one of our main
results, that extends for nonuniform behavior the conclusion in Theorem 2.2
from \cite{Mi.Ra.Sc.1998}. The reader will surely notice that our techniques are
of a completely different type. Let us also point out that the methods used in
this bibliographic source cannot apply in the nonuniform setting.

\begin{theorem}
\label{th4}The evolution family $\mathcal{U}$ is nonuniform exponentially
stable if and only if there exists an invertible generator $G_{\mathcal{U}%
,\alpha}$, $\alpha\in\mathcal{A}(\mathcal{U})$. In particular, if
$\mathcal{U}$ is uniform exponentially bounded, then $\mathcal{U}$ is uniform
exponentially stable if and only if the generator $G$ of the evolution
semigroup $\mathcal{T}$ on $C_{00}(\mathbb{R}_{+},X)$ is invertible.
\end{theorem}

\begin{proof}
We prove the first statement of the theorem.

\emph{Necessity}. Assume that $\mathcal{U}$ is $\alpha$-nonuniform
exponentially stable. For any fixed $f\in\mathcal{C}(\mathcal{U},\alpha)$ we
define
\[
u_{f}(t)=\int_{0}^{t}U(t,\xi)f(\xi)d\xi,\;t\geq0.
\]
From $f=0\Rightarrow u_{f}=0$ we deduce that $G_{\mathcal{U},\alpha}$ is a
one-to-one map on $\mathcal{C}(\mathcal{U},\alpha)$. To prove that
$G_{\mathcal{U},\alpha}$ is invertible, one needs to show first that $u_{f}%
\in\mathcal{C}(\mathcal{U},\alpha)$ for each $f\in\mathcal{C}(\mathcal{U}%
,\alpha)$. We estimate
\begin{align*}
\varphi_{\mathcal{U},\alpha}(t,u_{f})  &  \leq\sup\limits_{\tau\geq t}\text{
}e^{-\alpha(\tau-t)}\int_{0}^{t}\parallel U(\tau,\xi)f(\xi)\parallel d\xi\\
&  \leq\sup\limits_{\tau\geq t}\text{ }e^{-\alpha(\tau-t)}\int_{0}%
^{t}e^{\alpha(\tau-\xi)}\varphi_{\mathcal{U},\alpha}(\xi,f)d\xi\\
&  =\int_{0}^{t}e^{\alpha(t-\xi)}\varphi_{\mathcal{U},\alpha}(\xi
,f)d\xi\text{.}%
\end{align*}
We claim that $\lim\limits_{t\rightarrow\infty}\varphi_{\mathcal{U},\alpha
}(t,u_{f})=0$, therefore $u_{f}\in\mathcal{C}(\mathcal{U},\alpha)$. Indeed, it
follows from $\lim\limits_{t\rightarrow\infty}\varphi_{\mathcal{U},\alpha
}(t,f)=0$ that there exits a constant $K>0$ such that
\begin{equation}
\varphi_{\mathcal{U},\alpha}(t,f)\leq K,\text{ for all }t\geq0. \label{eq.l1}%
\end{equation}
Pick $\varepsilon>0$. Assuming that $\alpha<0$, there exits $\delta_{1}>0$
such that
\begin{equation}
\varphi_{\mathcal{U},\alpha}(t,f)<\dfrac{\varepsilon}{2}(-\alpha),\text{ for
all }t>\delta_{1}. \label{eq.l2}%
\end{equation}
The convergence of integral $\int_{0}^{\infty}e^{\alpha\xi}d\xi$ implies the
existence of some $\delta_{2}>0$, such that for all $t^{\prime},t^{\prime
\prime}>\delta_{2}$ one has
\begin{equation}
\int_{t^{\prime}}^{t^{\prime\prime}}e^{\alpha\xi}d\xi<\dfrac{\varepsilon}{2K}.
\label{eq.l3}%
\end{equation}
Let $\delta=\max\{\delta_{1},\delta_{2}\}$ and choose $t>2\delta$. By
\eqref{eq.l1}--\eqref{eq.l3} we have
\begin{align*}
\int_{0}^{t}e^{\alpha(t-\xi)}\varphi_{\mathcal{U},\alpha}(\xi,f)d\xi &
=\int_{0}^{\delta}e^{\alpha(t-\xi)}\varphi_{\mathcal{U},\alpha}(\xi
,f)d\xi+\int_{\delta}^{t}e^{\alpha(t-\xi)}\varphi_{\mathcal{U},\alpha}%
(\xi,f)d\xi\\
&  \leq K\int_{0}^{\delta}e^{\alpha(t-\xi)}d\xi+\dfrac{\varepsilon}{2}%
(-\alpha)\int_{\delta}^{t}e^{\alpha(t-\xi)}d\xi\\
&  =K\int_{t-\delta}^{t}e^{\alpha\xi}d\xi+\dfrac{\varepsilon}{2}(-\alpha
)\int_{0}^{t-\delta}e^{\alpha\xi}d\xi<\varepsilon,
\end{align*}
which proves the claim. We conclude that $G_{\mathcal{U},\alpha}$ is
algebraically invertible and $G_{\mathcal{U},\alpha}^{-1}f=-u_{f}$.
Furthermore, inequality
\[
\varphi_{\mathcal{U},\alpha}(t,u_{f})\leq\int_{0}^{t}e^{\alpha(t-\xi)}%
\varphi_{\mathcal{U},\alpha}(\xi,f)d\xi
\]
yields
\[
\left\Vert u_{f}\right\Vert _{\mathcal{U},\alpha}\leq\left\Vert f\right\Vert
_{\mathcal{U},\alpha}\int_{0}^{t}e^{\alpha(t-\xi)}d\xi\leq-\frac{1}{\alpha
}\left\Vert f\right\Vert _{\mathcal{U},\alpha}\text{,}%
\]
that is $G_{\mathcal{U},\alpha}^{-1}$ is bounded and the following estimation
holds:
\[
\left\Vert G_{\mathcal{U},\alpha}^{-1}f\right\Vert _{\mathcal{U},\alpha}%
\leq-\frac{1}{\alpha}\left\Vert f\right\Vert _{\mathcal{U},\alpha}.
\label{eq4}%
\]

\emph{Sufficiency}. Without loss of generality we may assume that $\alpha
\geq0$. Suppose that the operator $G_{\mathcal{U},\alpha}:D(G_{\mathcal{U}%
,\alpha})\subset\mathcal{C}(\mathcal{U},\alpha)\rightarrow\mathcal{C}%
(\mathcal{U},\alpha)$ is invertible and put
\[
c=c(\alpha)=\parallel G_{\mathcal{U},\alpha}^{-1}\parallel.
\]

For each positive integer $n\in\mathbb{N}^{\ast}$ we denote $\theta_{n}%
=\ln\frac{e^{n}}{e^{n}-1}\rightarrow0$. For fixed $t>s\geq0$ and $n$ large
enough such that $s+\theta_{n}\leq t\leq n$, let us consider a continuous
function $\alpha_{n}:\mathbb{R}_{+}\rightarrow\lbrack0,1]$ with
\[
\alpha_{n}(\xi)=%
\begin{cases}
0, & \text{ if }0\leq\xi\leq s,\\
1, & \text{ if }s+\theta_{n}\leq\xi\leq n,\\
0, & \text{ if }\xi\geq n+\theta_{n}\text{.}%
\end{cases}
\]

\textit{Step 1}. We prove that
\begin{equation}
\parallel U(t,s)\parallel\leq(c\alpha+1)M_{\alpha}(s),\text{ for }t\geq
s\geq0\text{.} \label{eq.st1}%
\end{equation}

Evidently \eqref{eq.st1} holds for $\alpha=0$. Assume now that $\alpha>0$ and
fix $t>s\geq0$, and $x\in X$. For $n$ large enough we define
\[
f_{n}(\xi)=%
\begin{cases}
\alpha_{n}(\xi)e^{-\alpha(\xi-s)}U(\xi,s)x\text{,} & \text{if }\xi>s,\\
0\text{,} & \text{if }0\leq\xi\leq s\text{.}%
\end{cases}
\]
We have $f_{n}\in C_{c,0}(\mathbb{R}_{+},X)$ and thus $f_{n}\in\mathcal{C}%
(\mathcal{U},\alpha)$. We claim that
\[
\parallel f_{n}\parallel_{\mathcal{U},\alpha}\leq M_{\alpha}(s)\parallel
x\parallel.
\]
Indeed, if $\xi\leq s$, as $f_{n}(\xi)=0$, one has $\varphi_{\mathcal{U}%
,\alpha}(\xi,f_{n})=0$, meanwhile for $\xi>s$ the following estimation holds:
\begin{align*}
\varphi_{\mathcal{U},\alpha}(\xi,f_{n})  &  =\underset{\tau\geq\xi}{\sup
}\text{ }e^{-\alpha(\tau-\xi)}\parallel U(\tau,s)x\parallel\alpha_{n}%
(\xi)e^{-\alpha(\xi-s)}\\
&  \leq\underset{\tau\geq\xi}{\sup}\text{ }e^{-\alpha(\tau-s)}\parallel
U(\tau,s)x\parallel\leq M_{\alpha}(s)\parallel x\parallel.
\end{align*}
Putting $u_{n}=G_{\mathcal{U},\alpha}^{-1}(-f_{n})$ one gets
\begin{align*}
u_{n}(t)  &  =\int_{0}^{t}U(t,\xi)f_{n}(\xi)d\xi\\
&  =\int_{s}^{s+\theta_{n}}\alpha_{n}(\xi)e^{-\alpha(\xi-s)}d\xi
\,U(t,s)x+\int_{s+\theta_{n}}^{t}\alpha_{n}(\xi)e^{-\alpha(\xi-s)}%
d\xi\,U(t,s)x\\
&  =I_{n}\,U(t,s)x+\frac{1}{\alpha}\left[  e^{-\alpha\theta_{n}}%
-e^{-\alpha(t-s)}\right]  U(t,s)x\text{.}%
\end{align*}
Here $I_{n}=\int_{s}^{s+\theta_{n}}\alpha_{n}(\xi)e^{-\alpha(\xi-s)}d\xi$.
Inequality $0\leq I_{n}\leq\frac{1}{\alpha}\left(  1-e^{-\alpha\theta_{n}%
}\right)  $ \ leads to
\begin{align*}
\frac{1}{\alpha}e^{-\alpha\theta_{n}}  &  \parallel U(t,s)x\parallel\\
&  \leq\parallel u_{n}(t)\parallel+\frac{1}{\alpha}\left(  1-e^{-\alpha
\theta_{n}}\right)  \parallel U(t,s)x\parallel+\frac{1}{\alpha}e^{-\alpha
(t-s)}\parallel U(t,s)x\parallel\\
&  \leq\parallel u_{n}\parallel_{\mathcal{U},\alpha}+\frac{1}{\alpha}\left(
1-e^{-\alpha\theta_{n}}\right)  \parallel U(t,s)x\parallel+\frac{1}{\alpha
}M_{\alpha}(s)\parallel x\parallel\\
&  \leq c\parallel f_{n}\parallel_{\mathcal{U},\alpha}+\frac{1}{\alpha}\left(
1-e^{-\alpha\theta_{n}}\right)  \parallel U(t,s)x\parallel+\frac{1}{\alpha
}M_{\alpha}(s)\parallel x\parallel\\
&  \leq\frac{c\alpha+1}{\alpha}M_{\alpha}(s)\parallel x\parallel+\frac
{1}{\alpha}\left(  1-e^{-\alpha\theta_{n}}\right)  \parallel U(t,s)x\parallel
\text{.}%
\end{align*}
Now inequality (\ref{eq.st1}) results immediately when letting $n\rightarrow
\infty$.

\textit{Step 2. }For all $k\in\mathbb{N}$ the following holds
\begin{equation}
\parallel U(t,s)\parallel\leq\frac{c^{k}k!}{(t-s)^{k}}(c\alpha+1)M_{\alpha
}(s)\text{, }t>s\geq0\text{.} \label{eq.st2}%
\end{equation}
Step 1 implies that inequality \eqref{eq.st2} holds for $k=0$. Assume that
\eqref{eq.st2} holds for some $k\in\mathbb{N}$. For fixed $t>s\geq0$, $x\in X$
and sufficiently large $n$ we consider
\[
g_{n,k}(\xi)=%
\begin{cases}
\alpha_{n}(\xi)(\xi-s)^{k}U(\xi,s)x\text{,} & \text{if }\xi>s,\\
0\text{,} & \text{if }0\leq\xi\leq s\text{.}%
\end{cases}
\]
Since $g_{n,k}\in C_{c,0}(\mathbb{R}_{+},X)$, it follows that $g_{n,k}%
\in\mathcal{C}(\mathcal{U},\alpha)$. For $0\leq\xi\leq s$ we notice that
$\varphi_{\mathcal{U},\alpha}(\xi,g_{n,k})=0$, and if $\xi>s$ we have
\begin{align*}
\varphi_{\mathcal{U},\alpha}(\xi,g_{n,k})  &  =\sup\limits_{\tau\geq\xi
}e^{-\alpha(\tau-\xi)}\parallel U(\tau,\xi)g_{n,k}(\xi)\parallel\\
&  =\sup\limits_{\tau\geq\xi}e^{-\alpha(\tau-\xi)}\alpha_{n}(\xi)(\xi
-s)^{k}\parallel U(\tau,s)x\parallel\\
&  \leq\sup\limits_{\tau\geq\xi}(\tau-s)^{k}\parallel U(\tau,s)x\parallel\\
&  \leq c^{k}k!(c\alpha+1)M_{\alpha}(s)\parallel x\parallel\text{,}%
\end{align*}
that results in
\[
\parallel g_{n,k}\parallel_{\mathcal{U},\alpha}\leq c^{k}k!(c\alpha
+1)M_{\alpha}(s)\parallel x\parallel.
\]
If $u_{n,k}=G^{-1}(-g_{n,k})$, then%
\begin{align*}
u_{n,k}(t)  &  =\int_{0}^{t}U(t,\xi)g_{n,k}(\xi)d\xi\\
&  =I_{n,k}\,U(t,s)x+\int_{s+\theta_{n}}^{t}(\xi-s)^{k}d\xi\,U(t,s)x\\
&  =I_{n,k}\,U(t,s)x+\frac{1}{k+1}\left[  (t-s)^{k+1}-\theta_{n}^{k+1}\right]
U(t,s)x\text{,}%
\end{align*}
where $I_{n,k}=\int_{s}^{s+\theta_{n}}\alpha_{n}(\xi)(\xi-s)^{k}d\xi\leq
\frac{1}{k+1}\theta_{n}^{k+1}$. Let us estimate%
\begin{align*}
\frac{(t-s)^{k+1}}{k+1}  &  \,\parallel U(t,s)x\parallel\\
&  \leq\parallel u_{n,k}(t)\parallel+\frac{1}{k+1}\theta_{n}^{k+1}\,\parallel
U(t,s)x\parallel+I_{n,k}\parallel U(t,s)x\parallel\\
&  \leq c^{k+1}k!(c\alpha+1)M_{\alpha}(s)\parallel x\parallel+\frac{2}%
{k+1}\theta_{n}^{k+1}\,\parallel U(t,s)x\parallel\text{.}%
\end{align*}
Letting $n\rightarrow\infty$ we deduce that (\ref{eq.st2}) works for $k+1$.

\textit{Step 3.} Pick $\delta\in(0,1)$. Multiplying (\ref{eq.st2}) by
$\delta^{k}$ and summing with respect to $k\in\mathbb{N}$ one easily gets%
\[
\parallel U(t,s)\parallel\leq\frac{c\alpha+1}{1-\delta}M_{\alpha}%
(s)e^{-\frac{\delta}{c}(t-s)},\,t\geq s\geq0,
\]
which proves that $\mathcal{U}$ is nonuniform exponentially stable.

Using Proposition \ref{p1}, the second statement is a simple consequence of
the first one.
\end{proof}

The above theorem gives a criterion for the existence of nonuniform
exponential stability in terms of invertibility of generators $G_{\mathcal{U}%
,\alpha}$, $\alpha\in\mathcal{A}(\mathcal{U})$. One may wonder whether is it
possible to deduce somehow simple conditions for the invertibility of a such
operator. We responding below to this legitimate question, connecting the
invertibility of the generators $G_{\mathcal{U},\alpha}$ to the existence of a
particular type of admissible exponents that we define below.

\begin{definition}
\label{def1}The admissible exponent $\alpha\in$$\mathcal{A}\left(
\mathcal{U}\right)  $ is called \emph{quasi-negative} if $\mathcal{C}%
(\mathcal{U},\alpha)=\mathcal{C}(\mathcal{U},-\nu)$, for some admissible
exponent $-\nu<0$.
\end{definition}

Evidently each negative admissible exponent is quasi-negative. Proposition
\ref{p1} states in fact that the evolution family $\mathcal{U}$ is uniform
exponentially stable if and only if it has strict quasi-negative exponents. In
the nonuniform case the situation is more complicated. In Example \ref{ex2}
the quasi-negative admissible exponents are only the negative ones. In fact,
in this case the evolution family has no admissible exponents both positive
and quasi-negative at the same time. For the evolution family in the next
example the situation is completely different, all admissible exponents (even
the positive ones) are quasi-negative.

\begin{example}
\label{ex1}For $t\geq s\geq0$ we set
\[
E\left(  t,s\right)  =s\left(  2+\sin s\right)  -t\left(  2+\sin t\right)
\]
(with the convention that $0\,\sin\ln 0=0$, as in Example \ref{ex2}), and let $$U\left(  t,s\right)  =e^{E\left(  t,s\right)  }\mathrm{Id}.$$ We claim
that the evolution family $\mathcal{U}$ is nonuniform exponentially stable,
but not uniform exponentially bounded. In fact,
\[
\mathcal{A}\left(  \mathcal{U}\right)  =\left[  -1,\infty\right)  \text{ and }
\mathcal{C}(\mathcal{U},\alpha)=\mathcal{C}\left(  \mathcal{U},-1\right)
\text{ for all } \alpha\in\mathcal{A}\left(  \mathcal{U}\right)  ,
\]
that is all admissible exponents are quasi-negative.
\end{example}

\begin{proof}
Indeed, for any fixed $\alpha\in\mathbb{R}$ let us denote
\[
E_{\alpha}(t,s)=E(t,s) -\alpha(t-s)=s\left(  2+\alpha+\sin s\right)  -t\left(
2+\alpha+\sin t\right)  , \; t\geq s\geq0.
\]
We have
\begin{align*}
E_{-1}\left(  t,s\right)  =s(1+\sin s)-t(1+\sin t) \leq s+s\sin s=f_{1}\left(
s\right)  \text{.}%
\end{align*}
This implies that $\| U(t,s) \|\leq e^{f_{1}(s)} e^{-(t-s)}$, for all $t\geq
s\geq0$, that is $-1\in\mathcal{A}\left(  \mathcal{U}\right)  $, and
consequently $[-1,\infty)\subset\mathcal{A}\left(  \mathcal{U}\right)  $.
Since $E_{-1}(2n\pi+3\pi/2,2n\pi)=2n\pi\to\infty$ as $n\to\infty$, we get that
$\alpha=-1$ is not a strict exponent, and by Proposition \ref{p1} it follows
that $\mathcal{C}\left(  \mathcal{U},-1\right)  \neq C_{00}(\mathbb{R}_{+}%
,X)$. Assume now that $\alpha<-1$ and let $\varepsilon>0$ such that
$\alpha=-1-\varepsilon$. In this case we have
\[
E_{\alpha}\left(  t,s\right)  =E\left(  t,s\right)  +\left(  1+\varepsilon
\right)  \left(  t-s\right)  =s\left(  1-\varepsilon+\sin s\right)  -t\left(
1-\varepsilon+\sin t\right)  .
\]
Fix $s\geq0$. Since
\[
E_{\alpha}\left(  2n\pi+{3\pi}/{2},s\right)  =s\left(  1-\varepsilon+\sin
s\right)  +\varepsilon\left(  2n\pi+{3\pi}/{2}\right)  ,
\]
for $n$ sufficiently large, it follows that $\underset{t\geq s}{\sup}
E_{\alpha}\left(  t,s\right)  =\infty$, and thus $\alpha\notin\mathcal{A}%
\left(  \mathcal{U}\right)  $. We conclude that $\mathcal{A}\left(
\mathcal{U}\right)  =\left[  -1,\infty\right)  $.

Let now $\alpha\in\mathcal{A}\left(  \mathcal{U}\right)  $. For any fixed
$s\geq0$ and $u\in C(\mathbb{R}_{+},X)$ we have%
\begin{align*}
\varphi_{\mathcal{U},\alpha}\left(  s,u\right)   &  =\underset{t\geq s}{\sup
}e^{-\alpha\left(  t-s\right)  }\left\Vert U\left(  t,s\right)  u\left(
s\right)  \right\Vert =\underset{t\geq s}{\sup}e^{E_{\alpha}\left(
t,s\right)  }\parallel u(s) \parallel=e^{f_{2}\left(  s\right)  }\parallel
u\left(  s\right)  \parallel\text{,}%
\end{align*}
where $f_{2}\left(  s\right)  =\underset{t\geq s}{\sup}E_{\alpha}\left(
t,s\right)  <\infty$, and similarly $\varphi_{\mathcal{U},-1}\left(
s,u\right)  =e^{f_{1}\left(  s\right)  }\parallel u\left(  s\right)
\parallel.$ Put $n_{s}=\left[  \frac{s}{2\pi}-\frac{3}{4}\right]  $. It
follows that $t_{s}=2(n_{s}+1)\pi+{3\pi}/{2}\in\left(  s,s+2\pi\right]  $. Let
us remark that
\[
f_{1}\left(  s\right)  -f_{2}\left(  s\right)  \leq f_{1}\left(  s\right)
-E_{\alpha}\left(  t_{s},s\right)  =\left(  1+\alpha\right)  \left(
t_{s}-s\right)  \leq2\pi\left(  1+\alpha\right)  .
\]
Gathering the above identities and estimation, one gets%
\begin{align*}
\varphi_{\mathcal{U},-1}\left(  s,u\right)   &  =e^{f_{1}\left(  s\right)
}\parallel u\left(  s\right)  \parallel=e^{f_{1}\left(  s\right)
-f_{2}\left(  s\right)  }\varphi_{\mathcal{U},\alpha}\left(  s,u\right)  \leq
e^{2\pi\left(  1+\alpha\right)  }\varphi_{\mathcal{U},\alpha}\left(
s,u\right)  \text{.}%
\end{align*}
We conclude that $\mathcal{C}(\mathcal{U},\alpha)=\mathcal{C}(\mathcal{U}%
,-1)$. Since $\mathcal{C}\left(  \mathcal{U},-1\right)  \neq C_{00}%
(\mathbb{R}_{+},X)$, we get that $\mathcal{C}\left(  \mathcal{U}%
,\alpha\right)  \neq C_{00}(\mathbb{R}_{+},X)$ for every $\alpha\in
\mathcal{A}\left(  \mathcal{U}\right)  $, and by Proposition \ref{p1} it
follows that $\mathcal{U}$ is not uniform exponentially bounded.
\end{proof}

Next theorem is a fundamental result for our survey, and offers a necessary
and sufficient condition for invertibility of each generator $G_{\mathcal{U}%
,\alpha}$, $\alpha\in\mathcal{A}\left(  \mathcal{U}\right)  $.

\begin{theorem}
\label{th2}The infinitesimal generator $G_{\mathcal{U},\alpha}$ is invertible
if and only if the admissible exponent $\alpha$ is quasi-negative.
\end{theorem}

\begin{proof}
\emph{Necessity}. Assume that $G_{\mathcal{U},\alpha}$ is invertible. It
suffices to consider the case $\alpha\geq0$. According to Theorem \ref{th4},
$\mathcal{U}$ is nonuniform exponentially stable. Choose $\nu>0$ with $-\nu
\in\mathcal{A}\left(  \mathcal{U}\right)  $, $-\nu\neq\inf\mathcal{A}\left(
\mathcal{U}\right)  $. For each fixed $s\geq0$ and $x\in X\smallsetminus
\left\{  0\right\}  $ we construct the map $\widetilde{u}_{s,x}\in
C(\mathbb{R}_{+},X)$ given by%
\begin{equation}
\widetilde{u}_{s,x}\left(  \xi\right)  =%
\begin{cases}
e^{\nu\left(  \xi-s\right)  }U\left(  \xi,s\right)  x\text{,} & \text{if }%
\xi>s,\\
x\text{,} & \text{if }0\leq\xi\leq s\text{.}%
\end{cases}
\label{eq5}%
\end{equation}
Pick $\varepsilon>0$ such that $-\nu-\varepsilon\in\mathcal{A}\left(
\mathcal{U}\right)  $. For $t>s$\ we have
\begin{align*}
\varphi_{\mathcal{U},\alpha}\left(  t,\widetilde{u}_{s,x}\right)   &
=\underset{\tau\geq t}{\sup}\text{ }e^{-\alpha\left(  \tau-t\right)  }%
e^{\nu\left(  t-s\right)  }\left\Vert U\left(  \tau,s\right)  x\right\Vert \\
&  \leq\underset{\tau\geq t}{\sup}\text{ }e^{\nu\left(  t-s\right)
}e^{-\left(  \nu+\varepsilon\right)  \left(  \tau-s\right)  }M_{-\nu
-\varepsilon}\left(  s\right)  \left\Vert x\right\Vert \\
&  \leq e^{-\varepsilon\left(  t-s\right)  }M_{-\nu-\varepsilon}\left(
s\right)  \left\Vert x\right\Vert \text{,}%
\end{align*}
therefore $\underset{t\rightarrow\infty}{\lim}\varphi_{\mathcal{U},\alpha
}\left(  t,\widetilde{u}_{s,x}\right)  =0$. Proposition \ref{p2} implies that
the below set is nonempty:
\[
\Lambda_{s,x}=\left\{  t\geq0:\sup\limits_{\xi\geq0}\text{ }\varphi
_{\mathcal{U},\alpha}(\xi,\widetilde{u}_{s,x})=\varphi_{\mathcal{U},\alpha
}\left(  t,\widetilde{u}_{s,x}\right)  \right\}  .
\]
As the map $\xi\mapsto\varphi_{\mathcal{U},\alpha}(\xi,\widetilde{u}_{s,x})$
is continuous, it\ follows that $t_{s,x}=\inf\Lambda_{s,x}\in\Lambda_{s,x}$.
We have the alternative:

(A1) There exists $\nu>0$, $-\nu\in\mathcal{A}\left(  \mathcal{U}\right)  $,
$-\nu\neq\inf\mathcal{A}\left(  \mathcal{U}\right)  $ such that for each
$s\geq0$ and $x\in X\smallsetminus\left\{  0\right\}  $, $t_{s,x}\in\left[
s,s+\frac{1}{\nu}\right]  $.

(A2) For any sufficiently small $\nu>0$, $-\nu\in\mathcal{A}\left(
\mathcal{U}\right)  $, $-\nu\neq\inf\mathcal{A}\left(  \mathcal{U}\right)  $,
there exists $s\geq0$ and $x\in X\smallsetminus\left\{  0\right\}  $ with
$t_{s,x}>s+\frac{1}{\nu}$.

Assume that (A1) holds. In this case, for all $t\geq s$ and $x\in
X\smallsetminus\left\{  0\right\}  $ we have%
\begin{align*}
\parallel\widetilde{u}_{s,x}\left(  t\right)  \parallel &  \leq\varphi
_{\mathcal{U},\alpha}\left(  t,\widetilde{u}_{s,x}\right)  \leq\varphi
_{\mathcal{U},\alpha}\left(  t_{s,x},\widetilde{u}_{s,x}\right) \\
&  =\underset{\tau\geq t_{s,x}}{\sup}\text{ }e^{-\alpha\left(  \tau
-t_{s,x}\right)  }e^{\nu\left(  t_{s,x}-s\right)  }\left\Vert U\left(
\tau,s\right)  x\right\Vert \text{.}%
\end{align*}
The above estimation yields%
\begin{align*}
\underset{t\geq s}{\sup}\text{ }e^{\nu\left(  t-s\right)  }\left\Vert U\left(
t,s\right)  x\right\Vert  &  \leq\underset{\tau\geq t_{s,x}}{\sup}\text{
}e^{-\alpha\left(  \tau-t_{s,x}\right)  }e^{\nu\left(  t_{s,x}-s\right)
}\left\Vert U\left(  \tau,s\right)  x\right\Vert \\
&  =\underset{\tau\geq t_{s,x}}{\sup}\text{ }e^{\left(  \alpha+\nu\right)
\left(  t_{s,x}-s\right)  }e^{-\alpha\left(  \tau-s\right)  }\left\Vert
U\left(  \tau,s\right)  x\right\Vert \\
&  \leq e^{\frac{1}{\nu}\left(  \alpha+\nu\right)  }\underset{\tau\geq s}%
{\sup}\text{ }e^{-\alpha\left(  \tau-s\right)  }\left\Vert U\left(
\tau,s\right)  x\right\Vert \text{.}%
\end{align*}
Therefore, for all $s\geq0$ and $x\in X$\ we have%
\[
\underset{t\geq s}{\sup}\text{ }e^{-\alpha\left(  t-s\right)  }\left\Vert
U\left(  t,s\right)  x\right\Vert \leq\underset{t\geq s}{\sup}\text{ }%
e^{\nu\left(  t-s\right)  }\left\Vert U\left(  t,s\right)  x\right\Vert \leq
K\underset{t\geq s}{\sup}\text{ }e^{-\alpha\left(  t-s\right)  }\left\Vert
U\left(  t,s\right)  x\right\Vert ,
\]
where $K=e^{\frac{1}{\nu}\left(  \alpha+\nu\right)  }$. It turns out that%
\[
\varphi_{\mathcal{U},\alpha}\left(  s,u\right)  \leq\varphi_{\mathcal{U},-\nu
}\left(  s,u\right)  \leq K\varphi_{\mathcal{U},\alpha}\left(  s,u\right)
\text{, }s\geq0\text{, }u\in C(\mathbb{R}_{+},X).
\]
We conclude that $\mathcal{C}(\mathcal{U},\alpha)=\mathcal{C}(\mathcal{U}%
,-\nu)$, as Banach spaces, thus $\alpha$ is quasi-negative.

Assume that (A2) holds. For sufficiently large $n\in\mathbb{N}^{\ast}$ such
that $-\frac{1}{n}\in\mathcal{A}\left(  \mathcal{U}\right)  $, $-\frac{1}%
{n}\neq\inf\mathcal{A}\left(  \mathcal{U}\right)  $, we put $s_{n}\geq0$,
$x_{n}\in X\smallsetminus\left\{  0\right\}  $ and $t_{n}=t_{s_{n},x_{n}%
}>s_{n}+n$, as in the statement (A2). Let us define the $C^{1}$-map $\psi
_{n}:\mathbb{R}_{+}\rightarrow\mathbb{R}_{+}$ by%
\[
\psi_{n}\left(  t\right)  =%
\begin{cases}
e^{\frac{1}{n}\left(  t-s_{n}\right)  }\text{,} & \text{if }t>t_{n}\text{,}\\
\frac{t}{n}e^{\frac{1}{n}\left(  t_{n}-s_{n}\right)  }+e^{\frac{1}{n}\left(
t_{n}-s_{n}\right)  }\left(  1-\frac{t_{n}}{n}\right)  \text{,} & \text{if
}s_{n}+\delta_{n}<t\leq t_{n}\text{,}\\
a_{n}t^{2}+b_{n}t+c_{n}\text{, } & \text{if }s_{n}<t\leq s_{n}+\delta
_{n}\text{,}\\
0\text{,} & \text{if }0\leq t\leq s_{n}\text{.}%
\end{cases}
\]
The real constants $a_{n}, b_{n}, c_{n}\in\mathbb{R}$ and $\delta_{n}%
\in\left(  s_{n},t_{n}\right)  $ are determined by the $C^{1}$-condition
imposed to the map $\psi_{n}$. We also define $u_{n}$, $f_{n}:\mathbb{R}%
_{+}\rightarrow X$ by%
\[
u_{n}\left(  t\right)  =%
\begin{cases}
\psi_{n}\left(  t\right)  U\left(  t,s_{n}\right)  x_{n}\text{,} & \text{if
}t>s_{n}\text{,}\\
0\text{,} & \text{if }0\leq t\leq s_{n}\text{,}%
\end{cases}
\]
and%
\[
f_{n}\left(  t\right)  =%
\begin{cases}
\psi_{n}^{\prime}\left(  t\right)  U\left(  t,s_{n}\right)  x_{n}\text{,} &
\text{if }t>s_{n}\text{,}\\
0\text{,} & \text{if }0\leq t\leq s_{n}\text{.}%
\end{cases}
\]
Evidently $u_{n},$ $f_{n}\in\mathcal{C}(\mathcal{U},\alpha)$ and
$G_{\mathcal{U},\alpha}u_{n}=-f_{n}$. Notice that for $t\in\left[
0,t_{n}\right]  $ we have%
\begin{align*}
\varphi_{\mathcal{U},\alpha}\left(  t,u_{n}\right)   &  =\underset{\tau\geq
t}{\sup}\text{ }e^{-\alpha\left(  \tau-t\right)  }\psi_{n}\left(  t\right)
\left\Vert U\left(  \tau,s_{n}\right)  x_{n}\right\Vert \\
&  \leq\underset{\tau\geq t}{\sup}\text{ }e^{-\alpha\left(  \tau-t\right)
}e^{\frac{1}{n}\left(  t-s_{n}\right)  }\left\Vert U\left(  \tau,s_{n}\right)
x_{n}\right\Vert \\
&  =\varphi_{\mathcal{U},\alpha}\left(  t,\widetilde{u}_{s_{n},x_{n}}\right)
\text{,}%
\end{align*}
where $\widetilde{u}_{s_{n},x_{n}}$ is defined in (\ref{eq5}). If $t\geq
t_{n}$, as $\varphi_{\mathcal{U},\alpha}\left(  t,u_{n}\right)  =\varphi
_{\mathcal{U},\alpha}\left(  t,\widetilde{u}_{s_{n},x_{n}}\right)  $, it
follows that $\parallel u_{n}\parallel_{\mathcal{U},\alpha}=\varphi
_{\mathcal{U},\alpha}\left(  t_{n},\widetilde{u}_{s_{n},x_{n}}\right)  $, and
similarly one gets $$\parallel f_{n}\parallel_{\mathcal{U},\alpha}=\frac{1}%
{n}\varphi_{\mathcal{U},\alpha}\left(  t_{n},\widetilde{u}_{s_{n},x_{n}%
}\right).$$
These identities imply%
\[
\frac{\parallel u_{n}\parallel_{\mathcal{U},\alpha}}{\parallel f_{n}%
\parallel_{\mathcal{U},\alpha}}=n\text{,}%
\]
which leads to $\underset{f\in\mathcal{C}(\mathcal{U},\alpha)}{\sup}$
$\frac{\left\Vert G_{\mathcal{U},\alpha}^{-1}f\right\Vert _{\mathcal{U}%
,\alpha}}{\left\Vert f\right\Vert _{\mathcal{U},\alpha}}\geq n$. Thus,
$G_{\mathcal{U},\alpha}^{-1}$ is not bounded, which is false, and eventually
only alternative (A1) holds, that proves the claim.

\emph{Sufficiency}. Assume that $\alpha\in\mathcal{A}\left(  \mathcal{U}%
\right)  $ is a quasi-negative exponent, and choose $\beta\in\mathcal{A}%
\left(  \mathcal{U}\right)  $, $\beta<0$ such that $\mathcal{C}(\mathcal{U}%
,\alpha)=\mathcal{C}(\mathcal{U},\beta)$. Remark \ref{rem.gen} implies
$G_{\mathcal{U},\alpha}=G_{\mathcal{U},\beta}$. Since $\mathcal{U}$ is $\beta
$-nonuniform exponentially stable, from the proof of the necessity of Theorem
\ref{th4} it follows that $G_{\mathcal{U},\beta}$ is invertible as well as
$G_{\mathcal{U},\alpha}$.
\end{proof}

For any $C_{0}$-semigroup $\mathcal{T}=\left\{  T(t)\right\}  _{t\geq0}$ with
generator $G$, the \emph{spectral mapping inclusion }holds (see Theorem 2.6 in
\cite{Ch.La.1999}), that is
\[
e^{t\sigma(G)}\subset\sigma(T(t))\text{, }t\geq0.
\]
In the particular case of the evolution semigroups (that is in the uniform
setting) this feature is stronger, precisely the above inclusion also holds
the other way, and this is what is called the \emph{spectral mapping theorem}.
Our next result emphasizes that this important property of evolution
semigroups is still valid in the nonuniform case.

\begin{theorem}
Each $\mathcal{T}_{\alpha}$, $\alpha\in\mathcal{A}\left(  \mathcal{U}\right)
$, satisfies the identity%
\[
e^{t\sigma\left(  G_{\mathcal{U},\alpha}\right)  }=\sigma\left(  T_{\alpha
}\left(  t\right)  \right)  \smallsetminus\left\{  0\right\}  \text{, }%
t\geq0\text{.}%
\]
Moreover, $\sigma\left(  G_{\mathcal{U},\alpha}\right)  $ is a left half-plane
and $\sigma\left(  T_{\alpha}\left(  t\right)  \right)  $, $t\geq0$, is a disc.
\end{theorem}

\begin{proof}
If $\lambda\in\mathbb{C}$, then we consider the \emph{rescaled evolution
family}
\[
U_{\lambda}(t,s)=e^{-\lambda(t-s)}U(t,s).
\]
For arbitrary $\alpha\in\mathcal{A}\left(  \mathcal{U}\right)  $, $t\geq0$ and
$u\in C(\mathbb{R}_{+},X)$ one has
\begin{align*}
\varphi_{\mathcal{U}_{\lambda},\alpha-\operatorname{Re}\lambda}\left(
t,u\right)   &  =\underset{\tau\geq t}{\sup}\text{ }e^{-\left(  \alpha
-\operatorname{Re}\lambda\right)  \left(  \tau-t\right)  }\left\Vert
U_{\lambda}\left(  \tau,t\right)  u\left(  t\right)  \right\Vert \\
&  =\underset{\tau\geq t}{\sup}\text{ }e^{-\alpha\left(  \tau-t\right)
}\left\Vert U\left(  \tau,t\right)  u\left(  t\right)  \right\Vert
=\varphi_{\mathcal{U},\alpha}\left(  t,u\right)  \text{.}%
\end{align*}
We deduce that $\mathcal{A}\left(  \mathcal{U}_{\lambda}\right)
=\mathcal{A}\left(  \mathcal{U}\right)  -\operatorname{Re}\lambda$, and
$\mathcal{C}(\mathcal{U}_{\lambda},\alpha-\operatorname{Re}\lambda
)=\mathcal{C}(\mathcal{U},\alpha)$. The definition of the generator implies
also that $D\left(  G_{\mathcal{U},\alpha}\right)  =D\left(  G_{\mathcal{U}%
_{\lambda},\alpha-\operatorname{Re}\lambda}\right)  $ and
\begin{equation}
G_{\mathcal{U}_{\lambda},\alpha-\operatorname{Re}\lambda}=G_{\mathcal{U}%
,\alpha}-\lambda\,\mathrm{Id}\text{.} \label{eq1}%
\end{equation}
Let $\lambda\in\rho\left(  G_{\mathcal{U},\alpha}\right)  $, that is
$G_{\mathcal{U}_{\lambda},\alpha-\operatorname{Re}\lambda}$ is invertible.
Choose $\mu\in\mathbb{C}$ with $\operatorname{Re}\mu\geq\operatorname{Re}%
\lambda$. We only have two options:

(B1) $\operatorname{Re}\mu>\alpha$;

(B2) $\operatorname{Re}\lambda\leq\operatorname{Re}\mu\leq\alpha$.

In case (B1), Theorem \ref{th2} and formula (\ref{eq1}) imply that
$G_{\mathcal{U}_{\mu},\alpha-\operatorname{Re}\mu}$ is invertible,
consequently $\mu\in\rho\left(  G_{\mathcal{U},\alpha}\right)  $.

Assume that (B2) holds. According to the same Theorem \ref{th2}, as
$G_{\mathcal{U}_{\lambda},\alpha-\operatorname{Re}\lambda}$ is invertible,
there exists $\nu>0$ with $\mathcal{C}(\mathcal{U}_{\lambda},\alpha
-\operatorname{Re}\lambda)=\mathcal{C}(\mathcal{U}_{\lambda},-\nu)$, and
\[
G_{\mathcal{U}_{\lambda},\alpha-\operatorname{Re}\lambda}=G_{\mathcal{U}%
_{\lambda},-\nu}.
\]
Using Eq. (\ref{eq1}), we successively have%
\begin{align*}
G_{\mathcal{U}_{\mu},\alpha-\operatorname{Re}\mu}  &  =G_{\mathcal{U}%
_{\lambda},\alpha-\operatorname{Re}\lambda}+\left(  \lambda-\mu\right)
\,\mathrm{Id}\\
&  =G_{\mathcal{U}_{\lambda},-\nu}+\left(  \lambda-\mu\right)  \,
\mathrm{Id}\\
&  =G_{\mathcal{U}_{\mu},-\nu+\operatorname{Re}\lambda-\operatorname{Re}\mu
}\text{.}%
\end{align*}
Since $-\nu+\operatorname{Re}\lambda-\operatorname{Re}\mu<0$, then
$G_{\mathcal{U}_{\mu},-\nu+\operatorname{Re}\lambda-\operatorname{Re}\mu}$ is
invertible, hence $G_{\mathcal{U}_{\mu},\alpha-\operatorname{Re}\mu}$ is also
invertible. This shows that $\mu\in\rho\left(  G_{\mathcal{U},\alpha}\right)
$, which implies that the spectrum $\sigma\left(  G_{\mathcal{U},\alpha
}\right)  $ is a left half-plane. For the rest of the proof we refer the reader to
\cite{Mi.Ra.Sc.1998}.
\end{proof}

\section{Nonuniform exponential stability of bounded orbits}

\label{sec.bounded}

From Section \ref{sec.nonuniform} we already know that a spectral property of
some generator $G_{\mathcal{U},\alpha}$ ( $0\notin\sigma(G_{\mathcal{U}%
,\alpha})$), can completely characterize the existence of nonuniform
exponential stability for the evolution family $\mathcal{U}$. To be useful in practical applications, a linear dynamical system must provide information not only on the global stability of the dynamics, but also on the stability of a given trajectory. In what follows
we expose a sufficient condition for the existence of nonuniform exponential
stability of all bounded orbits of $\mathcal{U}$. To be more specific, we
replace hypothesys $0\notin\sigma(G_{\mathcal{U},\alpha})$, by a weaker one
imposed this time to the approximate point spectrum of $G_{\mathcal{U},\alpha
}$. Let us notice that this result generalizes in fact Theorem 3.2 in
\cite{Mi.Ra.Sc.1998}. Recall that the \emph{approximate point spectrum} of a
bounded linear operator $T:X\rightarrow X$ is the set
\[
\sigma_{ap}(T)=\{\lambda\in\mathbb{C}:\,\forall\,\varepsilon>0\;\exists\,x\in
X\text{ , }\Vert x\Vert=1\text{, with }\Vert\lambda x-Tx\Vert\leq
\varepsilon\}.
\]

\begin{theorem}
\label{th.aprox} Let $\mathcal{U}$ be a nonuniform exponentially bounded
evolution family, and $\alpha\in\mathcal{A}\left(  \mathcal{U}\right)  $ with
property
\[
\sigma_{ap}(G_{\mathcal{U},\alpha})\cap i\mathbb{R}\neq i\mathbb{R}.
\]
If $\sup\limits_{t\geq t_{0}}\parallel U(t,t_{0})x_{0}\parallel<\infty$ for
some $x_{0}\in X$ and $t_{0}\geq0$, then there exist a constant $\nu>0$ and a
continuous function $N:\mathbb{R}_{+}\rightarrow(0,\infty)$, independent of
$x$ and $t_{0}$, satisfying
\[
\parallel U(t,t_{0})x_{0}\parallel\leq N(t_{0})e^{-\nu(t-t_{0})}\parallel
x_{0}\parallel,\text{ for all }t\geq t_{0}. \label{eq.aprox}%
\]

\end{theorem}

\begin{proof}
It suffices to consider $\alpha\geq0$. One can easily prove that Lemma 3.1 in
\cite{Mi.Ra.Sc.1998} also holds in our case, and thus $0\notin\sigma
_{ap}(G_{\mathcal{U},\alpha})$. This implies that there exists $c=c(\alpha)>0$
such that
\[
\label{eq.th2.1}\parallel u\parallel_{\mathcal{U},\alpha}\leq c \parallel
G_{\mathcal{U},\alpha} u\parallel_{\mathcal{U},\alpha}, \text{ for all } u\in
D(G_{\mathcal{U},\alpha}).
\]
Let $x_{0}\in X$ and $t_{0}\geq0$ such that $\sup\limits_{\tau\geq t_{0}%
}\parallel U(\tau, t_{0})x_{0}\parallel<\infty$. The proof is divided into
four steps.

\textit{Step 1.} We prove that%
\begin{equation}
\label{eq.t2.s1}\parallel U(t,t_{0})x_{0} \parallel\leq\frac{2c}{t-t_{0}}%
\sup\limits_{\tau\geq t_{0}}\parallel U(\tau, t_{0})x_{0}\parallel, \text{ for
} t>t_{0}.
\end{equation}

For any fixed $t>0$, we define $\psi_{t}:\mathbb{R}_{+}\to\mathbb{R}_{+}$ by
\[
\psi_{t}(s)=
\begin{cases}
\left(  \frac{4}{3}\right)  ^{3/2}t^{-{7}/{2}}s^{{3}/{2}}, & \text{ if }
s\in\left[  0,\frac{3t}{4}\right)  ,\\
-\frac{8}{t^{4}}s^{2}+\frac{14}{t^{3}}s-\frac{5}{t^{2}}, & \text{ if }
s\in\left[  \frac{3t}{4},t\right)  ,\\
\frac{1}{s^{2}}, & \text{ if } s\geq t.
\end{cases}
\]
An easy computation shows that
\[
\psi_{t}(t)=\frac{1}{t^{2}}, \text{ } \sup\limits_{s\geq0}\psi_{t}(s)=\frac
{9}{8t^{2}}, \text{ and } \lim\limits_{s\to\infty}\psi_{t}(s)=0.
\]
Moreover, $\psi_{t}$ is a function of class $C^{1}$ on $\mathbb{R}_{+}$ with
$\sup\limits_{s\geq0}|\psi_{t}^{\prime}(s)|=\frac{2}{t^{3}}$ and $\psi
_{t}^{\prime}(0)=0$.

Fix now $t>t_{0}$. We consider a function $g:\mathbb{R}_{+} \to\mathbb{R}_{+}$
defined by
\[
g(s)=
\begin{cases}
\psi_{t-t_{0}}(s-t_{0}), & \text{ if } s\geq t_{0},\\
0, & \text{ if } s\in[0,t_{0}).
\end{cases}
\]
Let us also define $u,f:\mathbb{R}_{+}\to X$ by
\[
u(s)=
\begin{cases}
g(s)U(s,t_{0})x_{0}, & \text{ if } s\geq t_{0},\\
0, & \text{ if } s\in[0,t_{0}),
\end{cases}
\]
and
\[
f(s)=
\begin{cases}
g^{\prime}(s)U(s,t_{0})x_{0}, & \text{ if } s\geq t_{0},\\
0, & \text{ if } s\in[0,t_{0}).
\end{cases}
\]
We claim that $u\in D(G_{\mathcal{U},\alpha})$, $f\in\mathcal{C}(\mathcal{U},
\alpha)$, and $G_{\mathcal{U},\alpha}u=-f$. Since
\[
u(s)=\int_{0}^{s} U(s,\xi)f(\xi)d\xi, \text{ for } s\geq0,
\]
it suffices to prove that $u,f\in\mathcal{C}(\mathcal{U}, \alpha)$. For $s\geq
t_{0}$ we obtain
\begin{align*}
\varphi_{\mathcal{U},\alpha}(s,u)  &  =\sup\limits_{\tau\geq s} e^{-\alpha
(\tau-s)} g(s) \parallel U(\tau,t_{0})x_{0}\parallel\\
&  \leq\sup\limits_{\tau\geq t_{0}} \parallel U(\tau,t_{0})x_{0}\parallel
g(s)\\
&  \leq\frac{9}{8(s-t_{0})^{2}} \sup\limits_{\tau\geq t_{0}} \parallel
U(\tau,t_{0})x_{0}\parallel\to0 \text{ as } s\to\infty,
\end{align*}
and thus $u\in\mathcal{C}(\mathcal{U}, \alpha)$. Using the same lines, one can
prove that $f\in\mathcal{C}(\mathcal{U}, \alpha)$, and
\[
\parallel f\parallel_{\mathcal{U},\alpha} \leq\frac{2}{(t-t_{0})^{3}}
\sup\limits_{\tau\geq t_{0}} \parallel U(\tau,t_{0})x_{0}\parallel.
\]
Now we have
\begin{align*}
\parallel u(t)\parallel &  =\frac{1}{(t-t_{0})^{2}} \parallel U(t,t_{0}%
)x_{0}\parallel\leq\varphi_{\mathcal{U},\alpha}(t,u)\leq\parallel
u\parallel_{\mathcal{U},\alpha} \leq c \parallel f\parallel_{\mathcal{U}%
,\alpha}\\
&  \leq\frac{2c}{(t-t_{0})^{3}} \sup\limits_{\tau\geq t_{0}} \parallel
U(\tau,t_{0})x_{0}\parallel,
\end{align*}
meaning that \eqref{eq.t2.s1} holds.

\textit{Step 2.} We show that there exists a constant $K=K(\alpha)>0$ such
that%
\begin{equation}
\label{eq.t2.s2}\parallel U(t,t_{0})x_{0} \parallel\leq\frac{cK}{t-t_{0}}M
(t_{0})\parallel x_{0}\parallel, \text{ for }t>t_{0}.
\end{equation}

By \eqref{eq.t2.s1} we get $\lim\limits_{t\to\infty}\parallel U(t,t_{0}%
)x_{0}\parallel=0$, and since the function $t\mapsto\parallel U(t,t_{0}%
)x_{0}\parallel$ is continuous, we have
\[
\sup\limits_{\tau\geq t_{0}} \parallel U(\tau,t_{0})x_{0}\parallel=\parallel
U(t_{0}^{\prime},t_{0})x_{0}\parallel, \text{ for some } t_{0}^{\prime}\geq
t_{0}.
\]
Setting $t=t_{0}^{\prime}$ in \eqref{eq.t2.s1}, we get $t_{0}^{\prime}%
-t_{0}\leq2c $ and from Definition \ref{d.exp.bound}, we obtain
\begin{align*}
\parallel U(t,t_{0})x_{0}\parallel &  \leq\frac{2c}{t-t_{0}}\parallel
U(t_{0}^{\prime}, t_{0})x_{0}\parallel\leq\frac{2c}{t-t_{0}}e^{\alpha
(t_{0}^{\prime}-t_{0})} M(t_{0}) \parallel x_{0}\parallel\\
&  \leq\frac{2c}{t-t_{0}}e^{2\alpha c} M(t_{0}) \parallel x_{0}\parallel.
\end{align*}
We thereby conclude that \eqref{eq.t2.s2} holds for $K=2e^{2\alpha c}$.

\textit{Step 3.} We prove that for all $n\in\mathbb{N}^{\ast}$,
\begin{equation}
\label{eq.t2.s3}\parallel U(t,t_{0})x_{0}\parallel\leq\frac{n!c^{n}K}%
{(t-t_{0})^{n}}M(t_{0})\parallel x_{0}\parallel, \text{ for }t>t_{0}.
\end{equation}

Assume that the previous inequality works for some positive integer $n-1$. For
any $t>0$ we define a function $\psi_{n,t}:\mathbb{R}_{+}\to\mathbb{R}_{+}$
by
\[
\psi_{n,t}(s)=
\begin{cases}
s^{n} , & \text{ if } s\in\left[  0,t\right)  ,\\
As^{2}+Bs+C, & \text{ if } s\in\left[  t,t+h\right)  ,\\
\frac{D}{s^{2}}, & \text{ if } s\geq t+h,
\end{cases}
\]
where the constants $A$, $B$, $C$, $D$ and $h$ are chosen such that
$\psi_{n,t}$ be a function of class $C^{1}$ on $\mathbb{R}_{+}$.

Fix $t>t_{0}$. Using the function above we construct $g_{n}:\mathbb{R}_{+}
\to\mathbb{R}_{+}$ defined by
\[
g_{n}(s)=
\begin{cases}
\psi_{n,t-t_{0}}(s-t_{0}), & \text{ if } s\geq t_{0},\\
0, & \text{ if } s\in[0,t_{0}).
\end{cases}
\]
Like in Step 1 we put
\[
u_{n}(s)=
\begin{cases}
g_{n}(s)U(s,t_{0})x_{0}, & \text{ if } s\geq t_{0},\\
0, & \text{ if } s\in[0,t_{0}),
\end{cases}
\]
and
\[
f_{n}(s)=
\begin{cases}
g_{n}^{\prime}(s)U(s,t_{0})x_{0}, & \text{ if } s\geq t_{0},\\
0, & \text{ if } s\in[0,t_{0}).
\end{cases}
\]
As previously one can prove that $u_{n}, f_{n}\in\mathcal{C}(\mathcal{U},
\alpha)$, in order to obtain $u_{n}\in D(G_{\mathcal{U},\alpha})$, and
$G_{\mathcal{U},\alpha}u_{n}=-f_{n}$. For $s\geq t_{0}$ we also have
\begin{align*}
\varphi_{\mathcal{U},\alpha}(s,f_{n})  &  \leq| g_{n}^{\prime}(s)|
\sup\limits_{\tau\geq s} \parallel U(\tau,t_{0})x_{0}\parallel\\
&  \leq n (s-t_{0})^{n-1}\sup\limits_{\tau\geq s}\frac{(n-1)!c^{n-1}K}%
{(\tau-t_{0})^{n-1}}M(t_{0})\parallel x_{0}\parallel\\
&  \leq n (s-t_{0})^{n-1}\frac{(n-1)!c^{n-1}K}{(s-t_{0})^{n-1}}M(t_{0}%
)\parallel x_{0}\parallel\\
&  =n! c^{n-1} K M(t_{0})\parallel x_{0}\parallel.
\end{align*}
This inequality implies $\parallel f_{n}\parallel_{\mathcal{U},\alpha}\leq n!
c^{n-1} K M(t_{0})\parallel x_{0}\parallel$. Then we get
\[
\parallel u_{n}(t)\parallel=(t-t_{0})^{n}\parallel U(t,t_{0})x_{0}%
\parallel\leq\parallel u_{n}\parallel_{\mathcal{U},\alpha}\leq c \parallel
f_{n}\parallel_{\mathcal{U},\alpha}\leq n! c^{n} K M(t_{0})\parallel
x_{0}\parallel,
\]
that proves \eqref{eq.t2.s3}.

\textit{Step 4.} For any fixed $\delta>c$, we show that
\begin{equation}
\label{eq.t2.s4}\parallel U(t,t_{0}) x_{0}\parallel\leq\frac{\delta K}%
{\delta-c}e^{-\frac{1}{\delta}(t-t_{0})}M(t_{0})\parallel x_{0}\parallel,
\text{ for } t> t_{0}.
\end{equation}

Using the notations in Step 2, we have
\begin{align*}
\parallel U(t,t_{0})x_{0}\parallel &  \leq\sup\limits_{\tau\geq t_{0}%
}\parallel U(\tau,t_{0})x_{0}\parallel=\parallel U(t_{0}^{\prime},t_{0}%
)x_{0}\parallel\\
&  \leq M(t_{0})e^{\alpha(t_{0}^{\prime}-t_{0})}\parallel x_{0}\parallel\leq
KM(t_{0})\parallel x_{0}\parallel,\text{ }t\geq t_{0}.
\end{align*}
Choose $\delta>c$. Dividing \eqref{eq.t2.s3} by $\delta^{n}$ and using the
above inequality one gets
\[
\frac{\left[  \delta^{-1}(t-t_{0})\right]  ^{n}}{n!}\parallel U(t,t_{0}%
)x_{0}\parallel\leq K\left(  \frac{c}{\delta}\right)  ^{n}M(t_{0})\parallel
x_{0}\parallel,\;t\geq t_{0},\,n\in\mathbb{N}.
\]
Summing with respect to $n$ we easily obtain formula (\ref{eq.t2.s4}).
\end{proof}

\section{Comments and Final Remarks}

An important comment is needed. If an evolution family is uniform
exponentially bounded, then it is possible to define its corresponding
evolution semigroup on some generic function spaces, as for example
$C_{00}(\mathbb{R}_{+},X)$. We notice that the space $C_{00}(\mathbb{R}%
_{+},X)$\emph{ models all the uniform exponentially bounded evolution
families}. For instance, in \cite{Mi.Ra.Sc.1998} it is shown that the
existence of uniform exponential stability is equivalent to the invertibility
of the generator of the evolution semigroup on $C_{00}(\mathbb{R}_{+},X)$.
After lecturing our paper the reader will certainly remark that in the nonuniform
setting, usually \emph{the modelling function space is not unique}. In fact
these spaces (that we call admissible) are merely dense subspaces of
$C_{00}(\mathbb{R}_{+},X)$, depending as explained above on each evolution
family, and on each particular admissible exponent. Let us also mention that our
Example \ref{ex2} (which is a version of a classical example of Perron, see \cite[p. 123]{Dal}) presents an evolution family with \emph{infinitely
many admissible Banach function spaces}.
We hope that our constructions and arguments will contribute to a better
understanding of the complexity and unpredictability of the nonuniform
behavior. We also hope that our results will lead to extending some known
results in the theory of exponential dichotomies, from the uniform to the
nonuniform case. As it seems, for such endeavor one may probably need to use
different admissible Banach function spaces (see Theorem 4.5 in
\cite{Mi.Ra.Sc.1998}).

\section*{Acknowledgments} The work of N.L. was partially supported by Horizon2020-2017-RISE-777911 project.

\end{document}